\def\r{{\mathbb R}^d}
\def\rr{{\mathbb R}^2}
\def\mm#1{\left[#1 \right]}
\def\LT#1{\mathcal L[#1]}
\def\LTL#1{\mathcal L\biggl[#1\biggr]}
\def\ILT#1{{\mathcal L}^{-1}[#1]}
\def\dis{\displaystyle}
\def\L#1#2{L_{#1}^{(#2)}}
\def\vg{\varGamma}
\def\ip#1#2{\langle #1,#2 \rangle}
\def\vl{\operatorname{Vol}}
\def\n#1#2{\rho_#1^{(#2)}}
\def\b#1#2{\beta_{#1,#2}}
\def\vd#1#2{\varSigma_v^{#1}(#2)}
\def\vdl#1{\varXi_v^{#1}(t)}
\def\a{\alpha_{\lambda,v}}
  \newtheorem{thm}{Theorem}
  \newtheorem{lemma}[thm]{Lemma}
  \newtheorem{prop}[thm]{Proposition}
  \newtheorem{remark}[thm]{Remark}
\numberwithin{equation}{section}
\numberwithin{thm}{section}
\begin{document}

\title{A formula for the expected volume of\\ the Wiener sausage
with constant drift}

\author{Yuji Hamana}

\address{Department of Mathematics, Kumamoto University,
Kurokami 2-39-1, Kumamoto 860-8555, Japan}
\email{hamana@kumamoto-u.ac.jp}

\author{Hiroyuki Matsumoto}

\address{Department of Physics and Mathematics, Aoyama Gakuin University,
Fuchinobe 5-10-1, Sagamihara 252-5258, Japan}
\email{matsu@gem.aoyama.ac.jp}

\subjclass[2010]{Primary 60J65; Secondary 44A10}

\keywords{Wiener sausage, Brownian motion with drift,
modified Bessel function}

\thanks{This work is partially supported by the Grant-in-Aid
for Scientific Research (C) No.24540181 and No.26400144
of Japan Society for the Promotion of Science (JSPS)}

\begin{abstract}
We consider the Wiener sausage for a Brownian motion with a constant drift
up to time $t$ associated with a closed ball.
In the two or more dimensional cases, we obtain the explicit form
of the expected volume of the Wiener sausage. The result says that
it can be represented by the sum of the mean volumes of
the multi-dimensional Wiener sausages
without a drift. In addition, we show that the leading term
of the expected volume of the Wiener sausage is written as
$\kappa t(1+o\mm{1})$ for large $t$ and an explicit form gives
the constant $\kappa$. The expression is of a complicated form,
but it converges to the known constant as the drift tends to $0$.
\end{abstract}

\maketitle

\pagestyle{myheadings}
\markboth{Y. HAMANA and H. MATSUMOTO}{THE WIENER SAUSAGE WITH CONSTANT DRIFT}


\section{Introduction}

\noindent
The figure obtained in moving a non-polar compact set $A$
along the trajectory of a stochastic process $X=\{X(t)\}_{t\geqq0}$
on time interval $[0,t]$, denoted by $W(t;A,X)$, is called the Wiener sausage
for $X$ associated with $A$ up to time $t$. The case that the stochastic process $X$
is a Brownian motion $B=\{B(t)\}_{t\geqq0}$ has been investigated
for a long time and the works in the early stage
are connected with heat conduction problems. In \cite{12}
the trace of the fundamental solution of a heat equation
on a cube with random holes is investigated under some Dirichlet boundary condition.

The expected volume of $W(t;A,B)$ is interpreted
as the total energy flow from a set $A$. Its large time asymptotic
is discussed in \cite{21} and, namely, it is asymptotically equal to
$2\pi t/\log t$ if $d=2$ and $t$ multiple of the Newtonian capacity
of $A$ if $d\geqq3$. In addition, the improved results
are given in \cite{17, 18}.
In the cases that $X$ is a stable process, the same problem is discussed
in \cite{3, 20}. When a compact set $A$ is a closed ball $D$ in $\r$ with
radius $r$, the explicit form and the large time asymptotics
of the expected volume of $W(t;D,B)$ are obtained in \cite{5, 6, 10}.
Moreover, its asymptotic expansion for large $t$ is given in \cite{7}.

We should mention that the limiting behavior of the family of
the volume of $W(t;A,B)$ have been studied.
The laws of large numbers are discussed in \cite{13, 15},
the central limit theorem are in \cite{16}
and the results concerning large deviations are given in
\cite{1, 2, 8}. 

This article deals with the case that $A=D$ and $X$ is a Brownian motion
with a constant drift $v$, denoted by $B_v$, for dimensions two or more.
We show a formula for the expected volume of $W(t;D,B_v)$ in terms of
the mean volumes of the Wiener sausages of different dimensions.
By using the formula, we study the asymptotic behavior of
the expected volume of $W(t;D,B_v)$ and show that it is of the form
$\kappa t(1+o\mm{1})$ for large $t$, $\kappa$ being the constant explicitly written.
As $v$ tends to $0$, the constant
converges to $0$ if $d=2$ and the Newtonian capacity of $D$ if $d\geqq3$.

This article is organized as follows. Section 2 is devoted to
giving the explicit form of the mean volume of $W(t;D,B_v)$ and
Section 3 deals with the calculation of its Laplace transform
which is given in Section 2. In Section 4 we discuss
the principal term of the expected volume of $W(t;D,B_v)$
and its asymptotics as $v$ tends to $0$.


\section{Formula for the expected volume of the Wiener sausage}

\noindent
Let $v$ be a constant vector in $\r$ and $B=\{B(t)\}_{t\geq0}$ be
a standard Brownian motion on $\r$. We consider a Brownian motion
$B_v=\{B_v(t)\}_{t\geqq0}$ with constant drift $v$ given by
$B_v(t)=B(t)+vt$ for $t\geqq0$.
The Wiener sausage $\{W(t;A,B_v)\}_{t\geqq0}$
associated with a compact set $A$
is the set-valued stochastic process defined by
\[
W(t;A,B_v)=\bigcup_{0\leqq s\leqq t}(B_v(s)+A),
\]
where $x+E=\{x+y\,;\,y\in E\}$ for $x\in\r$ and $E\subset \r$.
We consider the case when $A$ is a closed ball
with center $0$ and radius $r>0$, denoted by $D$.
Note that
\[
\begin{split}
\vl (W(t;D,B_v))
&=\vl(\{x\in\r\,;\,|B_v(s)-x|\leqq r\text{ for some $s\in[0,t]$}\})\\
&=\vl(\{x\in\r\,;\,|B_v(s)+x|\leqq r\text{ for some $s\in[0,t]$}\})\\
&=\vl(\{x\in\r\,;\,\inf\{s\geqq0\,;\,|B_v(s)+x|\leqq r\}\leqq t\}),
\end{split}
\]
where $\vl(E)$ is the volume of a set $E$
and $|y|$ is used to denote the Euclidean distance between $0$ and $y$.
For $t\geqq0$ let
\[
\L vd(t)=\int_{\r\setminus D}P_x(\tau_v\leq t)dx,
\]
where $\tau_v=\inf\{t\geqq0\,;\,|B_v(t)|\leqq r\}$
and $P_x$ is the law of the Brownian motion starting from $x\in\r$.
Then we have
\[
E[\vl(W(t;D,B_v))]=\vl(D)+\L vd(t).
\]

The purpose of this section is to give a formula for $\L vd(t)$.
We begin to provide some necessary notation. For $\mu\in\mathbb C$
we use $I_\mu$ and $K_\mu$ for the modified Bessel functions of the first kind
and the second kind, respectively. We often call $K_\mu$ the Macdonald function.
In the case that $\mu$ is real, each modified Bessel function is positive on
the interval $(0,\infty)\subset\mathbb R$.
We refer about the properties of the Bessel functions
to \cite{14, 19, 23}.
For $\mu\geqq0$ and $n\geqq0$ if at least one of them is not $0$, we set
\[
\begin{split}
&\xi_{0,0}=\frac{I_0(r|v|)^2}2,\quad \zeta_{0,0}=\frac{I_0(r|v|)I_1(r|v|)}2,\\
&\xi_{\mu,n}=\frac{(-1)^n (\mu+n)\vg(2\mu+n) I_{\mu+n}(r|v|)^2}{n!},\\
&\zeta_{\mu,n}=\frac{(-1)^n (\mu+n)\vg(2\mu+n) I_{\mu+n}(r|v|)I_{\mu+n+1}(r|v|)}{n!}
\end{split}
\]
Moreover, for $t\geqq0$ and an integer $m\geqq1$ let
\[
\vd mt=e^{-|v|^2t/2} \L 0 m(t)
\]
and
\[
\vdl m=\vd mt+|v|^2\int_0^t \vd ms ds
+\frac14 |v|^4 \int_0^t (t-s) \vd ms ds.
\]
We use $\nu=d/2-1$ and $S_m=2\pi^{(m+1)/2}/\vg((m+1)/2)$
for an integer $m\geqq0$, where $\vg$ is the Gamma function.
It is well-known that $S_m$ coincides with the surface area of
$m$-dimensional unit sphere if $m\geqq1$.

Our main results are the following:

\medskip

\begin{thm}\label{Theorem 2.1}
Let $v\neq0$. For $t>0$ we have that, if $d\geqq2$,
\[
\L vd(t)=\frac{2\pi S_{2\nu}}{|v|^{2\nu}}\sum_{n=0}^\infty
\frac{\xi_{\nu,n}}{S_{2\nu+2n+1}r^{2\nu+2n}} \vdl{d+2n}
+\frac{\pi S_{2\nu} r}{|v|^{2\nu-1}} \biggl(\sum_{n=0}^\infty
\zeta_{\nu,n}\biggr)t.
\]
\end{thm}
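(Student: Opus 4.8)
The plan is to remove the drift by a Cameron--Martin--Girsanov change of measure, turn the resulting object into an exterior boundary value problem, solve it by separation of variables, and match Laplace transforms. Let $\tau=\tau_0=\inf\{s\geqq0:|B(s)|\leqq r\}$ be the hitting time of $D$ by the driftless motion. Girsanov's theorem identifies the law of $B_v$ started at $x$ with that of $B$ under the tilt $\exp(\ip v{B(t)-x}-\frac12|v|^2t)$, and optional stopping of this exponential martingale at $\tau$ gives
\[
P_x(\tau_v\leqq t)=E_x\Big[\exp\Big(\ip v{B(\tau)-x}-\tfrac12|v|^2\tau\Big);\tau\leqq t\Big],
\]
whence
\[
\L vd(t)=\int_{\r\setminus D}e^{-\ip vx}E_x\Big[e^{\ip v{B(\tau)}}e^{-|v|^2\tau/2};\tau\leqq t\Big]\,dx.
\]
Taking the Laplace transform in $t$ turns $e^{-|v|^2\tau/2}\mathbf 1_{\{\tau\leqq t\}}$ into $e^{-\alpha\tau}/\lambda$ with $\alpha=\lambda+\frac12|v|^2$, so that everything reduces to computing $g(x)=E_x[e^{\ip v{B(\tau)}}e^{-\alpha\tau}]$, the decaying solution of $\frac12\Delta g=\alpha g$ on $\r\setminus D$ with $g=e^{\ip vx}$ on $\partial D$.

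I would solve this exterior modified Helmholtz problem by separation of variables. Taking $v=|v|e_1$ and expanding the boundary datum $e^{|v|r\cos\theta}$ through the Gegenbauer plane-wave formula
\[
e^{z\cos\theta}=\Gamma(\nu)\Big(\tfrac z2\Big)^{-\nu}\sum_{n=0}^\infty(\nu+n)I_{\nu+n}(z)C_n^\nu(\cos\theta)
\]
introduces the factors $I_{\nu+n}(r|v|)$, while the decaying radial part carrying angular momentum $n$ is $|x|^{-\nu}K_{\nu+n}(\sqrt{2\alpha}|x|)$; matching on $\partial D$ expresses $g$ as a series in $I_{\nu+n}(r|v|)(|x|/r)^{-\nu}K_{\nu+n}(\sqrt{2\alpha}|x|)/K_{\nu+n}(\sqrt{2\alpha}r)$ times $C_n^\nu(\cos\theta)$. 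Inserting this into $\int_{\r\setminus D}e^{-\ip vx}g(x)\,dx$, expanding the weight $e^{-\ip vx}$ by the same formula (its coefficients carry $(-1)^n$ because $C_n^\nu(-\cos\theta)=(-1)^nC_n^\nu(\cos\theta)$), and using the orthogonality of Gegenbauer polynomials on the sphere collapses the double sum to a single sum over $n$; the angular integrations generate the surface factors $S_{2\nu+2n+1}$ and the $\Gamma(2\nu+n)$ appearing in $\xi_{\nu,n}$ and $\zeta_{\nu,n}$.

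Every surviving radial integral then has the form $\int_r^\infty s\,K_{\nu+n}(\sqrt{2\alpha}s)I_{\nu+n}(|v|s)\,ds$ (the exponent of $s$ collapsing to $1$ since $d-1-2\nu=1$), which the Lommel product integral evaluates as a boundary term at $s=r$ divided by $2\alpha-|v|^2=2\lambda$. Together with the $1/\lambda$ from the Laplace transform this supplies the factor $\alpha^2/\lambda^2$, and one checks directly that $\mathcal L[\vdl{m}](\lambda)=(\alpha^2/\lambda^2)\mathcal L[\L 0m](\alpha)$. Writing the driftless transform as $\mathcal L[\L 0m](\alpha)\propto(\alpha\sqrt{2\alpha})^{-1}K_{\nu+n+1}(\sqrt{2\alpha}r)/K_{\nu+n}(\sqrt{2\alpha}r)$ for $m=d+2n$ and using $\sqrt{2\alpha}\,K_{\nu+n}'(\sqrt{2\alpha}r)/K_{\nu+n}(\sqrt{2\alpha}r)=-\sqrt{2\alpha}\,K_{\nu+n+1}/K_{\nu+n}+(\nu+n)/r$, the part of the boundary term proportional to $I_{\nu+n}(r|v|)^2$ reproduces exactly the $\vdl{d+2n}$ term. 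The complementary part, after $I_{\nu+n}'(|v|r)=I_{\nu+n+1}(|v|r)+\frac{\nu+n}{|v|r}I_{\nu+n}(|v|r)$, loses its $\alpha$-dependence; the two $I_{\nu+n}(r|v|)^2$ contributions cancel, leaving a pure $1/\lambda^2$ multiplied by $I_{\nu+n}(r|v|)I_{\nu+n+1}(r|v|)$, whose inverse transform is the advertised linear-in-$t$ term with coefficient $\sum_n\zeta_{\nu,n}$. A termwise Laplace inversion then yields the stated formula.

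I expect the main obstacle to be the precise bookkeeping of the two superimposed Gegenbauer expansions: fixing the constants $\xi_{\nu,n},\zeta_{\nu,n}$ with their $\Gamma(2\nu+n)$, signs $(-1)^n$ and normalisations $S_{2\nu+2n+1}$ requires the exact Gegenbauer orthogonality constant, and the degenerate case $\nu=n=0$ (where $\Gamma(2\nu+n)$ is singular and the expansion degenerates to Chebyshev polynomials) has to be handled on its own, which is why $\xi_{0,0}$ and $\zeta_{0,0}$ are defined separately. A secondary difficulty is to justify interchanging the infinite sum with the spatial integral and with the Laplace inversion, for which one needs uniform control of the ratios $K_{\nu+n+1}/K_{\nu+n}$ and of $I_{\nu+n}(r|v|)^2$ to secure convergence of the series.
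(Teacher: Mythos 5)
Your proposal follows essentially the same route as the paper: Laplace transform in $t$, the Gegenbauer--Bessel series expansion of the hitting-time transform (your Girsanov-plus-exterior-Helmholtz derivation reconstructs exactly the formula for $E_x[e^{-\lambda\tau_v}]$ that the paper imports from its companion paper \cite{11}), the Lommel-type radial integral (the paper's Lemma \ref{Lemma 3.1}), the identification $F_{\nu+n}(\lambda)\propto(\alpha^2/\lambda^2)\mathcal L[\L 0{d+2n}](\alpha)$ giving the $\vdl{d+2n}$ terms (Lemma \ref{Lemma 2.4}), and termwise inversion justified by uniform bounds on $K_{\nu+n+1}/K_{\nu+n}$ and $I_{\nu+n}(r|v|)$ (Lemmas \ref{Lemma 2.3}, \ref{Lemma 2.5}, \ref{Lemma 2.6}). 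The only cosmetic deviations are that you evaluate the angular integral by Gegenbauer orthogonality where the paper uses the direct formula $\int_0^\pi e^{iz\cos\theta}C_n^\nu(\cos\theta)\sin^{2\nu}\theta\,d\theta\propto J_{\nu+n}(z)/z^\nu$, and you correctly flag the same two technical points (the degenerate $\nu=n=0$ case and the interchange-of-limits estimates) that the paper handles separately.
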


\medskip

Throughout this paper the notation $\LT f$ and $\ILT f$
will be used to denoted the Laplace transform and the inverse Laplace transform
of a function $f$, respectively.
In the case when $v=0$, we have the explicit expression for $\L 0d$
by inverting $\LT{\L 0d}$ which have been already
calculated and find that $\L 0d$ is expressed
by modified Bessel functions and their zeros.
See \cite{5, 6, 10} for details.

Similarly to the case when $v=0$, we show Theorem \ref{Theorem 2.1}
by inverting $\LT{\L vd}$
when $v\neq0$. For $\mu\geqq0$ and $\lambda>0$ we put
\[
F_\mu(\lambda)=\frac{\sqrt{2\lambda+|v|^2}}{\lambda^2}
\frac{K_{\mu+1}(r\sqrt{2\lambda+|v|^2})}{K_\mu(r\sqrt{2\lambda+|v|^2})}.
\]
 We now show the explicit form of $\LT{\L vd}$ when $d\geqq2$.

\medskip

\begin{prop}\label{Proposition 2.2}
Let $v\neq0$. For $\lambda>0$ we have that
\begin{equation}
\LT{\L vd}(\lambda)=\frac{\pi S_{2\nu}r}{|v|^{2\nu}}
\sum_{n=0}^\infty \xi_{\nu,n} F_{\nu+n}(\lambda)
+\frac{\pi S_{2\nu}r}{\lambda^2 |v|^{2\nu-1}}
\sum_{n=0}^\infty \zeta_{\nu,n}.
\label{2.1}
\end{equation}
\end{prop}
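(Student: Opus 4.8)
The plan is to compute the transform by passing to the $\lambda$-resolvent of the killed process and then solving the associated elliptic problem explicitly. First I would interchange the two integrations in $\LT{\L vd}(\lambda)=\int_0^\infty e^{-\lambda t}\int_{\r\setminus D}P_x(\tau_v\leqq t)\,dx\,dt$ and use the elementary identity $\int_0^\infty e^{-\lambda t}P_x(\tau_v\leqq t)\,dt=\lambda^{-1}E_x[e^{-\lambda\tau_v}]$, which reduces the claim to evaluating $\lambda^{-1}\int_{\r\setminus D}u(x)\,dx$ for $u(x)=E_x[e^{-\lambda\tau_v}]$. Since the generator of $B_v$ is $\tfrac12\Delta+v\cdot\nabla$, the function $u$ is the bounded solution of $(\tfrac12\Delta+v\cdot\nabla)u=\lambda u$ on $\r\setminus D$ with $u=1$ on $\{|x|=r\}$ and $u\to0$ at infinity. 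The gauge transformation $u(x)=e^{-v\cdot x}w(x)$ removes the drift and turns this into the modified Helmholtz problem $\Delta w=(2\lambda+|v|^2)w$ with boundary value $w=e^{v\cdot x}$ on $\{|x|=r\}$; writing $\gamma=\sqrt{2\lambda+|v|^2}$, this is exactly the quantity appearing inside $F_\mu$.

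Next I would separate variables. Because the data depend on $x$ only through the angle $\theta$ between $x$ and $v$, I expand in Gegenbauer polynomials $C_\ell^\nu(\cos\theta)$ with $\nu=d/2-1$; the radial factors solving the modified Helmholtz equation and decaying at infinity are $\rho^{-\nu}K_{\nu+\ell}(\gamma\rho)$, so $w(\rho,\theta)=\sum_{\ell\geqq0}c_\ell\rho^{-\nu}K_{\nu+\ell}(\gamma\rho)C_\ell^\nu(\cos\theta)$. Matching the boundary value uses the Gegenbauer plane-wave expansion $e^{z\cos\theta}=2^\nu\vg(\nu)z^{-\nu}\sum_{\ell\geqq0}(\nu+\ell)I_{\nu+\ell}(z)C_\ell^\nu(\cos\theta)$ with $z=r|v|$, which produces the $I_{\nu+\ell}(r|v|)$ factors; by orthogonality of the $C_\ell^\nu$ one reads off $c_\ell$ proportional to $(\nu+\ell)I_{\nu+\ell}(r|v|)/K_{\nu+\ell}(\gamma r)$.

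I would then carry out $\int_{\r\setminus D}u\,dx=\int_r^\infty\rho^{d-1}\int_{S^{d-1}}e^{-v\cdot x}w\,d\sigma\,d\rho$. The angular integral of $e^{-|v|\rho\cos\theta}C_\ell^\nu(\cos\theta)$ against the weight $\sin^{2\nu}\theta$ is evaluated by the same plane-wave expansion and orthogonality; the reflection $\theta\mapsto\pi-\theta$ supplies the sign $(-1)^\ell$ that sits in $\xi_{\nu,n}$ and $\zeta_{\nu,n}$ and reproduces a factor $I_{\nu+\ell}(|v|\rho)$. After the powers of $\rho$ collapse, the radial part reduces to $\int_r^\infty\rho\,I_{\nu+\ell}(|v|\rho)K_{\nu+\ell}(\gamma\rho)\,d\rho$, which I evaluate by the Lommel cross-product identity for two solutions of the modified Bessel equation; since $\gamma>|v|$ the contribution at infinity vanishes and only the boundary term at $\rho=r$ survives. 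Finally I substitute the recurrences $I_{\mu}'(z)=I_{\mu+1}(z)+(\mu/z)I_\mu(z)$ and $K_\mu'(z)/K_\mu(z)=-K_{\mu+1}(z)/K_\mu(z)+\mu/z$. The two copies of $(\nu+\ell)r^{-1}I_{\nu+\ell}(r|v|)$ cancel, and what remains splits into a term built from $I_{\nu+\ell}(r|v|)I_{\nu+\ell+1}(r|v|)$, which assembles into $\zeta_{\nu,n}$ and carries the factor $\lambda^{-2}$, and a term built from $I_{\nu+\ell}(r|v|)^2\,\gamma K_{\nu+\ell+1}(\gamma r)/K_{\nu+\ell}(\gamma r)=\lambda^2 I_{\nu+\ell}(r|v|)^2F_{\nu+\ell}(\lambda)$, which assembles into $\xi_{\nu,n}F_{\nu+n}(\lambda)$. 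Collecting the constants $2^\nu$, $\vg(\nu)$, $\pi$, $S_{2\nu}$ and the powers of $|v|$ then yields \eqref{2.1}.

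The main obstacle I anticipate is the degenerate case $d=2$, where $\nu=0$ and both $\vg(\nu)$ and the normalisation of $C_\ell^\nu$ blow up: here the plane-wave expansion must be replaced by its $\nu\to0$ limit, the Jacobi--Anger series $e^{z\cos\theta}=I_0(z)+2\sum_{\ell\geqq1}I_\ell(z)\cos\ell\theta$, which is precisely why $\xi_{0,0}$ and $\zeta_{0,0}$ are singled out with the extra factor $\tfrac12$. Alongside this, some care is needed to justify the termwise interchange of the angular and radial integrations with the infinite sum and the convergence of the resulting series in $\ell$, and to confirm that the decaying solution constructed above indeed equals the probabilistic $u(x)=E_x[e^{-\lambda\tau_v}]$; these are routine given the exponential decay of $K_{\nu+\ell}(\gamma\rho)$ but must be verified.
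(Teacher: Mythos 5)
Your proposal is correct, and its main computation coincides step for step with the paper's proof: both reduce the claim to $\lambda^{-1}\int_{\r\setminus D}E_x[e^{-\lambda\tau_v}]\,dx$, expand $E_x[e^{-\lambda\tau_v}]$ in a Gegenbauer series whose coefficients involve $(\nu+n)I_{\nu+n}(r|v|)/K_{\nu+n}(r\a)$, rotate $v$ onto a coordinate axis and pass to polar coordinates, evaluate the angular integral to produce the factor $(-1)^nI_{\nu+n}(|v|\rho)$, and compute $\int_r^\infty\rho\,I_{\nu+n}(|v|\rho)K_{\nu+n}(\a\rho)\,d\rho$ by a cross-product (Lommel) identity, which is exactly the paper's Lemma \ref{Lemma 3.1}; the split into the $\xi_{\nu,n}F_{\nu+n}(\lambda)$ and $\lambda^{-2}\zeta_{\nu,n}$ pieces, and the separate $d=2$ treatment via the Jacobi--Anger expansion, also match. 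The one genuine difference is the source of the series expansion: the paper quotes the explicit formula for $E_x[e^{-\lambda\tau_v}]$ from \cite{11}, whereas you re-derive it by the gauge transformation $u=e^{-v\cdot x}w$, separation of variables for the modified Helmholtz equation $\Delta w=(2\lambda+|v|^2)w$, and the Gegenbauer plane-wave expansion. This buys a self-contained argument, at the cost of an extra verification (which you correctly flag) that the constructed decaying series solution really is the probabilistic resolvent. One caution: the convergence and interchange issues you defer as ``routine'' are where the paper spends a large share of its effort — exponential decay of $K_{\nu+n}(\a\rho)$ in $\rho$ alone is not enough, and one needs growth control in $n$ of the Gegenbauer and Bessel factors, which the paper obtains through the bounds \eqref{2.9} and \eqref{3.5} together with Lemmas \ref{Lemma 2.5} and \ref{Lemma 2.6}, and finally Lemma \ref{Lemma 2.3} for the absolute convergence of the two series appearing in \eqref{2.1}.
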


\medskip

We need to prove that both summations of the right hand side
of \eqref{2.1} converges. The following lemma assures these convergences.

\medskip

\begin{lemma}\label{Lemma 2.3}
For $\mu\geqq0$ and $x>0$,
\[
\sum_{n=1}^\infty |\xi_{\mu,n}|\frac{K_{\mu+n+1}(x)}{K_{\mu+n}(x)}<\infty,\quad
\sum_{n=1}^\infty |\zeta_{\mu,n}|<\infty.
\]
\end{lemma}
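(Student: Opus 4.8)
The plan is to prove that both series are absolutely convergent by producing an explicit upper bound for the general term and then applying the ratio test; since every factor appearing in $|\xi_{\mu,n}|$ and $|\zeta_{\mu,n}|$ is positive, no cancellation is involved and it suffices to bound each term from above. Write $a=r|v|$ for the fixed argument of the modified Bessel functions of the first kind, keeping it distinct from the free variable $x>0$ occurring in the Macdonald ratio. The heuristic guiding the whole argument is that the factor $I_{\mu+n}(a)^2$ (respectively $I_{\mu+n}(a)I_{\mu+n+1}(a)$) contributes a reciprocal of $\vg(\mu+n+1)^2$ (respectively $\vg(\mu+n+1)\vg(\mu+n+2)$), and this, combined with $\vg(2\mu+n)/n!$, behaves essentially like $1/(n!)^2$, which decays far faster than the growth coming from $(a/2)^{2n}$ and from the (only linearly growing) Macdonald ratio.

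First I would record two elementary one-sided estimates. For the first kind, from the power series $I_\nu(a)=\sum_{k=0}^\infty (a/2)^{\nu+2k}/(k!\,\vg(\nu+k+1))$ the ratio of consecutive terms is $(a/2)^2/((k+1)(\nu+k+1))$, which is at most $1/2$ once $\nu\geqq a^2/2-1$; hence for all large $\nu$ the series is dominated by twice its leading term, giving
\[
I_\nu(a)\leqq \frac{2(a/2)^\nu}{\vg(\nu+1)}.
\]
For the Macdonald function I would use the recurrence $K_{\nu+1}(x)=K_{\nu-1}(x)+(2\nu/x)K_\nu(x)$ together with the fact that $\nu\mapsto K_\nu(x)$ is increasing on $[0,\infty)$ (clear from $K_\nu(x)=\int_0^\infty e^{-x\cosh t}\cosh(\nu t)\,dt$), so that $K_{\nu-1}(x)\leqq K_\nu(x)$ for $\nu\geqq1$ and therefore
\[
\frac{K_{\mu+n+1}(x)}{K_{\mu+n}(x)}\leqq 1+\frac{2(\mu+n)}{x}.
\]

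Substituting these two bounds into the definitions, the $n$-th term of the first series is bounded, for all large $n$, by
\[
\frac{4(\mu+n)\vg(2\mu+n)}{n!\,\vg(\mu+n+1)^2}\Bigl(\frac a2\Bigr)^{2\mu+2n}\Bigl(1+\frac{2(\mu+n)}{x}\Bigr),
\]
and the $n$-th term of the second series by the analogous expression with $\vg(\mu+n+1)^2$ replaced by $\vg(\mu+n+1)\vg(\mu+n+2)$ and with one extra factor $a/2$. I would then apply the ratio test to each majorant. In both cases the polynomial prefactors contribute a ratio tending to $1$, while the Gamma-function part contributes
\[
\frac{\vg(2\mu+n+1)}{\vg(2\mu+n)}\cdot\frac{n!}{(n+1)!}\cdot\frac{\vg(\mu+n+1)^2}{\vg(\mu+n+2)^2}\Bigl(\frac a2\Bigr)^2
=\frac{(2\mu+n)}{(n+1)(\mu+n+1)^2}\Bigl(\frac a2\Bigr)^2,
\]
which tends to $0$ as $n\to\infty$. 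Hence each majorizing series converges, and the two stated series converge absolutely.

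The only genuinely delicate point is making the upper bound on $I_\nu(a)$ rigorous uniformly in the order; this is why I isolate it as a geometric-tail estimate valid for all sufficiently large $\nu$, the finitely many remaining terms being harmless for convergence. The monotonicity and recurrence for $K_\nu$ are standard, and once both one-sided bounds are in hand the ratio test completes the argument mechanically.
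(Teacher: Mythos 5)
Your proof is correct, but it takes a genuinely different route from the paper's. For the Macdonald ratio you use the three-term recurrence $K_{\nu+1}(x)=K_{\nu-1}(x)+(2\nu/x)K_\nu(x)$ together with monotonicity of $K_\nu(x)$ in the order (from the $\int_0^\infty e^{-x\cosh t}\cosh(\nu t)\,dt$ representation), which gives the bound $1+2(\mu+n)/x$, growing only linearly in $n$; the paper instead proves two-sided bounds on $K_\mu$ itself (Lemma \ref{Lemma 2.5}, via the integral representation \eqref{2.5}) and obtains the much cruder estimate \eqref{2.10}, which is exponential in $n$ but still summable against the factorial decay. For $I_{\mu+n}(r|v|)$ you dominate the power series by a geometric tail, valid only for large orders and without the $e^{r|v|}$ factor, whereas the paper uses the uniform bound \eqref{2.9}; and you close the argument with the ratio test, whereas the paper closes it with an induction on Gamma-function ratios (Lemma \ref{Lemma 2.6}) to produce the explicit majorant $C c^n/(n!\,(n-1)!)$. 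Your version is more elementary and self-contained, and your $K$-ratio bound is sharper. What the paper's heavier machinery buys is reusability: the explicit, all-$n$ bounds \eqref{2.11} and \eqref{2.12}, together with Lemmas \ref{Lemma 2.5} and \ref{Lemma 2.6}, are invoked again later (in \eqref{3.6}--\eqref{3.7}, in \eqref{4.1}, and in the dominated-convergence argument of Lemma \ref{Lemma 4.2}, where uniformity in $\lambda$ is needed), so the paper's proof of this lemma doubles as a toolbox for the rest of the article, while your ratio-test conclusion, though perfectly sound for the lemma as stated, would have to be supplemented by an explicit tail bound to serve those later purposes.
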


\medskip

We prove Lemma \ref{Lemma 2.3} after showing that
Proposition \ref{Proposition 2.2} yields Theorem \ref{Theorem 2.1}.
Moreover the proof of Proposition \ref{Proposition 2.2}
is postponed to the next section.

Since the inverse transform of the second term of \eqref{2.1} is
\[
\frac{\pi S_{2\nu}r}{|v|^{2\nu-1}}
\biggl( \sum_{n=0}^\infty \zeta_{\nu,n}\biggr)t,
\]
it is sufficient to discuss the first term of \eqref{2.1}.
We first invert $F_{m/2-1}$ for each integer $m\geqq2$.
Set $\a=\sqrt{2\lambda+|v|^2}$ throughout this paper.

\medskip

\begin{lemma}\label{Lemma 2.4}
For $t>0$,
\[
\ILT{F_{m/2-1}}(t)=\frac{2 \vdl m}{S_{m-1} r^{m-1}}.
\]
\end{lemma}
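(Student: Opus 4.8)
The plan is to reduce everything to the already-known Laplace transform of the driftless mean sausage volume $\L 0m$ and to exploit the exponential factor $e^{-|v|^2t/2}$ built into $\vd mt$. From \cite{5, 6, 10} the driftless transform is, in the normalization of this paper,
\[
\LT{\L 0m}(\lambda)=\frac{S_{m-1}r^{m-1}}{2}\,
\frac{\sqrt{2\lambda}}{\lambda^2}\,
\frac{K_{m/2}(r\sqrt{2\lambda})}{K_{m/2-1}(r\sqrt{2\lambda})},
\]
so that, apart from the prefactor, it differs from $F_{m/2-1}(\lambda)$ only in carrying $\sqrt{2\lambda}$ where $F_{m/2-1}$ carries $\a=\sqrt{2\lambda+|v|^2}$.

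First I would use the elementary identity $\a=\sqrt{2(\lambda+|v|^2/2)}$. Replacing $\lambda$ by $\lambda+|v|^2/2$ in the display turns every $\sqrt{2\lambda}$ into $\a$ and every $\lambda^2$ into $(\lambda+|v|^2/2)^2$; solving for $F_{m/2-1}$ then gives
\[
F_{m/2-1}(\lambda)=\frac{2}{S_{m-1}r^{m-1}}\,
\frac{(\lambda+|v|^2/2)^2}{\lambda^2}\,
\LT{\L 0m}\Bigl(\lambda+\tfrac{|v|^2}{2}\Bigr).
\]
Next I would invoke the frequency–shift rule to recognize the translated transform as the transform of $\vd mt$:
\[
\LT{\L 0m}\Bigl(\lambda+\tfrac{|v|^2}{2}\Bigr)
=\LTL{e^{-|v|^2t/2}\L 0m(t)}(\lambda)
=\LT{\vd m\cdot}(\lambda).
\]

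Finally I would expand the rational prefactor as $(\lambda+|v|^2/2)^2/\lambda^2=1+|v|^2/\lambda+|v|^4/(4\lambda^2)$ and invert term by term, using the standard identities $\ILT{\lambda^{-1}\LT g}(t)=\int_0^t g(s)\,ds$ and $\ILT{\lambda^{-2}\LT g}(t)=\int_0^t(t-s)g(s)\,ds$. This yields
\[
\ILT{F_{m/2-1}}(t)=\frac{2}{S_{m-1}r^{m-1}}
\Bigl(\vd mt+|v|^2\!\int_0^t\!\vd ms\,ds
+\tfrac14|v|^4\!\int_0^t(t-s)\vd ms\,ds\Bigr),
\]
and the bracket is by definition $\vdl m$, which is the asserted formula.

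The main obstacle I anticipate is not the manipulation but fixing the exact constant and index shift in the driftless transform $\LT{\L 0m}$: the factor $2/(S_{m-1}r^{m-1})$ comes out correctly only if the normalization of $S_{m-1}$, the power $r^{m-1}$, and the Macdonald ratio $K_{m/2}/K_{m/2-1}$ match \cite{5, 6, 10} verbatim. As a consistency check, feeding this inversion into the first sum of Proposition \ref{Proposition 2.2} (with $m=d+2n$, so $m/2-1=\nu+n$) reproduces exactly the first sum of Theorem \ref{Theorem 2.1}. A secondary point is justifying the termwise inversion, which needs $\vd mt$ and its iterated integrals to be of admissible growth; since $m\geqq2$ forces $\mu=m/2-1\geqq0$, the Macdonald ratio is well defined and positive and no difficulty arises for the single term treated here.
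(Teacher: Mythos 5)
Your proof is correct and follows essentially the same route as the paper: both start from the known transform $\LT{\L 0m}$ (your normalization $\tfrac{S_{m-1}r^{m-1}}{2}\tfrac{\sqrt{2\lambda}}{\lambda^2}$ equals the paper's $\tfrac{S_{m-1}r^{m-1}}{\sqrt{2\lambda^3}}$), use the shift $\lambda\mapsto\lambda+|v|^2/2$ together with the frequency-shift rule to identify $\LT{\varSigma_v^m}$, expand the prefactor as $1+|v|^2/\lambda+|v|^4/(4\lambda^2)$, and invert term by term. The only difference is cosmetic: the paper writes down $\LT{\varSigma_v^m}$ first and then expresses $F_{m/2-1}$ through it, whereas you substitute into the driftless transform and solve, which is the same computation.
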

\begin{proof}
It is known that, for $\lambda>0$
\[
\LT{\L 0m}(\lambda)=\frac{S_{m-1}r^{m-1}}{\sqrt{2\lambda^3}}
\frac{K_{m/2}(r\sqrt{2\lambda})}{K_{m/2-1}(r\sqrt{2\lambda})}
\]
(cf. \cite{5}). It follows that
\[
\LT{\varSigma_v^m}(\lambda)=\frac{2S_{m-1}r^{m-1}}{\a^3}
\frac{K_{m/2}(r\a)}{K_{m/2-1}(r\a)}
\]
and hence we have
\[
F_{m/2-1}(\lambda)=\frac 2{S_{m-1}r^{m-1}}
\biggl(1+\frac{|v|^2}\lambda+\frac{|v|^4}{4\lambda^2}\biggr)
\LT{\varSigma_v^m}(\lambda).
\]
The standard formulas for the inverse Laplace transform
show the claim of this lemma.
\end{proof}

\medskip

Note that $\nu+n=(d+2n)/2-1$ for each integer $n\geqq0$.
Lemma \ref{Lemma 2.4} yields Theorem \ref{Theorem 2.1}
if we prove
\begin{equation*}
\LTL{\sum_{n=0}^\infty \xi_{\nu,n} \ILT{F_{\nu+n}}}(\lambda)
=\sum_{n=0}^\infty \xi_{\nu,n} F_{\nu+n}(\lambda).
\end{equation*}
For the purpose, we need to show that
\begin{equation}
\sum_{n=1}^\infty |\xi_{\nu,n}|\int_0^\infty
e^{-\lambda t}\ILT{F_{\nu+n}}(t)dt
\label{2.2}
\end{equation}
converges for each $\lambda>0$. Since \eqref{2.2} is equal to
\[
\frac{\a}{\lambda^2}\sum_{n=1}^\infty |\xi_{\nu,n}|
\frac{K_{\nu+n+1}(r\a)}{K_{\nu+n}(r\a)},
\]
Lemma \ref{Lemma 2.3} immediately implies the convergence of \eqref{2.2}.

The remainder of this section is devoted to showing Lemma \ref{Lemma 2.3}.
Before it, we give two lemmas.
One is an upper bound of the ratio of the Macdonald function.

\medskip

\begin{lemma}\label{Lemma 2.5}
For $x>0$ and $\mu\geqq 1/2$
\begin{align}
&K_\mu(x)\leqq 2^{\mu+1}\biggl(2+\frac1x\biggr)^\mu\vg(\mu)e^{-x},\label{2.3}\\
&K_\mu(x)\geqq \frac{2^{\mu-1}}{x^\mu}\vg(\mu) e^{-x}.\label{2.4}
\end{align}
\end{lemma}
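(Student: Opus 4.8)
The plan is to derive both inequalities from the standard integral representation
\[
K_\mu(x)=\frac12\Bigl(\frac2x\Bigr)^\mu\int_0^\infty e^{-t-x^2/(4t)}t^{\mu-1}\,dt,\qquad x>0,
\]
which already displays the factors $2^{\mu-1}x^{-\mu}$ and $\vg(\mu)$ occurring in the assertion. Completing the square, $t+x^2/(4t)=x+\bigl(\sqrt t-x/(2\sqrt t)\bigr)^2$, so the factor $e^{-x}$ splits off and it suffices to control
\[
J(x):=\int_0^\infty\exp\Bigl(-\bigl(\sqrt t-\tfrac{x}{2\sqrt t}\bigr)^2\Bigr)t^{\mu-1}\,dt=2^{1-\mu}x^\mu e^x K_\mu(x),
\]
so that \eqref{2.3} and \eqref{2.4} turn into an upper and a lower bound for $J(x)$.

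For the lower bound \eqref{2.4} I would prove $J(x)\geq\vg(\mu)$ for all $x>0$. First, since $\bigl(\sqrt t-x/(2\sqrt t)\bigr)^2\geq t-x\geq t-1$ for $0<x\leq1$, dominated convergence gives $\lim_{x\to0+}J(x)=\int_0^\infty e^{-t}t^{\mu-1}\,dt=\vg(\mu)$. Differentiating under the integral, $\partial_x\bigl(\sqrt t-x/(2\sqrt t)\bigr)^2=-(1-x/(2t))$, and recognising the two resulting integrals as $J$ with parameters $\mu$ and $\mu-1$ yields the clean identity $J'(x)=2^{1-\mu}x^\mu e^x\bigl(K_\mu(x)-K_{\mu-1}(x)\bigr)$. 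The decisive fact is that $K_\nu(x)$ is nondecreasing in $\nu\geq0$, which is immediate from $\partial_\nu K_\nu(x)=\int_0^\infty e^{-x\cosh\theta}\,\theta\sinh(\nu\theta)\,d\theta\geq0$ in the representation $K_\nu(x)=\int_0^\infty e^{-x\cosh\theta}\cosh(\nu\theta)\,d\theta$; combined with $K_\nu=K_{-\nu}$, this gives $K_\mu\geq K_{\mu-1}$ precisely when $\mu\geq|\mu-1|$, i.e. when $\mu\geq1/2$. Hence $J'\geq0$, $J$ is nondecreasing, and $J(x)\geq\vg(\mu)$, which is \eqref{2.4}. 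This argument pinpoints the role of the hypothesis $\mu\geq1/2$ and is sharp at $\mu=1/2$, where $K_{1/2}=K_{-1/2}$ forces equality.

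For the upper bound \eqref{2.3} I would again peel off $e^{-x}$, now starting from $K_\mu(x)=\int_0^\infty e^{-x\cosh\theta}\cosh(\mu\theta)\,d\theta$. Bounding $\cosh(\mu\theta)\leq e^{\mu\theta}$ and using the identity $\cosh\theta-1=\tfrac12 e^\theta(1-e^{-\theta})^2$, the substitution $r=e^\theta$ gives
\[
K_\mu(x)\leq e^{-x}\int_1^\infty\exp\Bigl(-\frac{x(r-1)^2}{2r}\Bigr)r^{\mu-1}\,dr.
\]
I would estimate this integral by splitting the range of $r$: near $r=1$ the exponent is comparable to $x(r-1)^2$, contributing a Gaussian factor of order $x^{-1/2}$, while for $r$ large one has $(r-1)^2/(2r)\geq c\,r$ with $c$ close to $1/2$, so that tail is dominated by $\int_0^\infty e^{-cxr}r^{\mu-1}\,dr=c^{-\mu}\vg(\mu)x^{-\mu}$. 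Adding the two contributions produces a bound of the shape $\mathrm{const}\cdot(2+1/x)^\mu\vg(\mu)$. The main obstacle here is purely bookkeeping: choosing the split point and the constant $c$ so that the prefactor comes out as exactly $2^{\mu+1}$, uniformly in $\mu\geq1/2$ and $x>0$. In particular one must retain the Gaussian piece rather than crudely bounding the exponential by $1$, since otherwise a spurious $r_1^\mu$ term spoils the estimate for large $\mu$; this is the one step where genuine care is needed.
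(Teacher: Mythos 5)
Your argument for the lower bound \eqref{2.4} is correct and genuinely different from the paper's. The paper reads \eqref{2.4} off the representation \eqref{2.5} in one line, writing $y^{\mu-1/2}(1+y/(2x))^{\mu-1/2}=y^{2\mu-1}(1/y+1/(2x))^{\mu-1/2}\geqq y^{2\mu-1}(2x)^{-(\mu-1/2)}$ and invoking the duplication formula \eqref{2.7}. You instead prove monotonicity of $J(x)=2^{1-\mu}x^{\mu}e^{x}K_{\mu}(x)$: your identity $J'(x)=2^{1-\mu}x^{\mu}e^{x}\bigl(K_{\mu}(x)-K_{\mu-1}(x)\bigr)$ is correct (it is the recurrence $K_{\mu}'=-K_{\mu-1}-(\mu/x)K_{\mu}$ in disguise), the inequality $K_{\mu}\geqq K_{\mu-1}=K_{|\mu-1|}$ holds precisely when $\mu\geqq1/2$, and $J(0+)=\vg(\mu)$ follows by dominated convergence; the interchange of derivative and integral is routine to justify. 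This half is a nice proof: it pinpoints where $\mu\geqq1/2$ enters and shows that \eqref{2.4} is an equality at $\mu=1/2$.

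The upper bound \eqref{2.3}, however, is not proved, and the step you defer as ``purely bookkeeping'' is where the entire difficulty sits: with the scheme as you describe it (one split point $r_1$ and one tail constant $c$, both fixed), the stated bound cannot come out, no matter how the constants are chosen. Dividing \eqref{2.3} by $e^{-x}$, the target is $2^{\mu+1}x^{-\mu}(2x+1)^{\mu}\vg(\mu)$. Your tail estimate gives at best $\int_{r_1}^{\infty}e^{-cxr}r^{\mu-1}dr\leqq(cx)^{-\mu}\vg(\mu)$, so you need $\bigl(2c(2x+1)\bigr)^{-\mu}$ bounded uniformly in $\mu\geqq1/2$, which forces $c\geqq1/(4x+2)$; as $x\downarrow0$ this means $c\to1/2$, and since $(r-1)^2/(2r)\geqq cr$ on $[r_1,\infty)$ requires $r_1\geqq(1-\sqrt{2c}\,)^{-1}$, the split point must blow up like $1/x$. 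Hence for any fixed pair $(r_1,c)$ with $c<1/2$ the proposed bound fails for small $x$ and large $\mu$. The margins really are this thin: as $x\downarrow0$ the step $\cosh(\mu\theta)\leqq e^{\mu\theta}$ already inflates $K_\mu$ by a factor tending to $2$, while \eqref{2.3} only has a factor $4$ to spare, so the whole splitting argument must lose less than a factor $2$, uniformly in $\mu$. The approach can be rescued, but by an extra idea rather than bookkeeping, namely a case distinction in $x$: for $x\leqq5/4$ do not split at all, use $(r-1)^2/(2r)\geqq r/2-1$ to bound the integral by $e^{x}(2/x)^{\mu}\vg(\mu)$ and then $e^{x}\leqq2(2x+1)^{1/2}\leqq2(2x+1)^{\mu}$; for $x\geqq5/4$ split at the fixed point $r_1=3$, where the tail constant $c=2/9$ suffices because $9/(2x)\leqq4+2/x$ there, and the Gaussian near piece is absorbed using $\vg(\mu)\geqq0.885$. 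The paper avoids all of this by working from \eqref{2.5}, in which $e^{-x}$ is factored out exactly and the leftover factor $(1+y/(2x))^{\mu-1/2}$ is polynomial in $y$; there the split at $y=1$ genuinely is trivial, and the constant $2^{\mu+1}$ drops out of \eqref{2.7}.
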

\begin{proof}
Recall the integral representation of $K_\mu$:
\begin{equation}
K_\mu(x)=\sqrt{\frac \pi{2x}}\frac{e^{-x}}{\vg(\mu+1/2)}
\int_0^\infty e^{-y}y^{\mu-1/2}\biggl(1+\frac y{2x}\biggr)^{\mu-1/2}dy
\label{2.5}
\end{equation}
(cf. \cite[p.206]{23}). For a proof of \eqref{2.3} we show
\begin{equation}
\int_0^\infty e^{-y}y^{\mu-1/2}\biggl(1+\frac y{2x}\biggr)^{\mu-1/2}dy
\leqq 4\biggl(1+\frac 1{2x}\biggr)^{\mu-1/2} \vg(2\mu).
\label{2.6}
\end{equation}
We divide the integral in the left hand side of \eqref{2.6}
into the following two parts.
One is the integral on $[0,1]$ and the other is that on $(1,\infty)$.
The integral on $[0,1]$ is bounded by
\[
\int_0^1 e^{-y}y^{\mu-1/2}\biggl(1+\frac y{2x}\biggr)^{\mu-1/2}dy
\leqq 
\biggl(1+\frac 1{2x}\biggr)^{\mu-1/2}\int_0^1 e^{-y}dy
\leqq \biggl(1+\frac 1{2x}\biggr)^{\mu-1/2}.
\]
Note that $3\vg(2\mu)\geqq 1$, which can be easily deduced from
$\mu\geqq 1/2$ and
\[
\vg(2\mu)\geqq \int_1^\infty e^{-x}dx\geqq\frac 1e.
\]
Thus we have 
\[
\int_0^1 e^{-y}y^{\mu-1/2}\biggl(1+\frac y{2x}\biggr)^{\mu-1/2}dy\leqq
\biggl(1+\frac 1{2x}\biggr)^{\mu-1/2} 3\vg(2\mu).
\]
Since, for the integral on $(1,\infty)$,
\[
\begin{split}
\int_1^\infty e^{-y}y^{\mu-1/2}\biggl(1+\frac y{2x}\biggr)^{\mu-1/2}dy
&=\int_1^\infty e^{-y}y^{2\mu-1}\biggl(\frac 1y+\frac 1{2x}\biggr)^{\mu-1/2}dy\\
&\leqq \biggl(1+\frac 1{2x}\biggr)^{\mu-1/2}\vg(2\mu).
\end{split}
\]
We obtain \eqref{2.6}.

With the help of the inequality $1/\sqrt{2x}\leqq \sqrt{1+1/2x}$ for $x>0$,
we deduce from \eqref{2.5} and \eqref{2.6} that
\[
K_\mu(x)\leqq
4\biggl(1+\frac 1{2x}\biggr)^\mu e^{-x}\frac{\sqrt\pi \vg(2\mu)}{\vg(\mu+1/2)}.
\]
The well-known formula
\begin{equation}
2^{2z-1}\vg(z)\vg\biggl(z+\frac12\biggr)=\sqrt\pi \vg(2z)
\label{2.7}
\end{equation}
(cf. \cite[p.3]{14}) yields \eqref{2.3}.

For the proof of \eqref{2.4}, note that
\[
\int_0^\infty e^{-y}y^{\mu-1/2}\biggl(1+\frac y{2x}\biggr)^{\mu-1/2}dy=
\int_0^\infty e^{-y}y^{2\mu-1}\biggl(\frac 1y+\frac 1{2x}\biggr)^{\mu-1/2}dy
\geqq\frac{\vg(2\mu)}{(2x)^{\mu-1/2}}.
\]
This, combined with \eqref{2.5} and \eqref{2.7}, implies \eqref{2.4}.
\end{proof}

\medskip

The other lemma is concerned with an upper bound of the ratio of Gamma functions.

\medskip

\begin{lemma}\label{Lemma 2.6}
Let $\mu\geqq0$. For an integer $n\geqq1$
\begin{equation}
\frac{\vg(2\mu+n)}{\vg(\mu+n)^2}
\leqq \frac{2^{\mu-1}\vg(2\mu+1)}{\vg(\mu+1)^2}\frac{2^n}{\vg(n)}.
\label{2.8}
\end{equation}
\end{lemma}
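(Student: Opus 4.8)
The plan is to prove the inequality
\[
\frac{\vg(2\mu+n)}{\vg(\mu+n)^2}
\leqq \frac{2^{\mu-1}\vg(2\mu+1)}{\vg(\mu+1)^2}\frac{2^n}{\vg(n)}
\]
by induction on $n$, tracking the multiplicative change of both sides as $n$ increases by $1$ and reducing the inductive step to an elementary inequality for the ratio of consecutive factors. First I would set
\[
a_n=\frac{\vg(2\mu+n)}{\vg(\mu+n)^2},\qquad
b_n=\frac{2^n}{\vg(n)},
\]
so that the claim is $a_n\leqq C\, b_n$ with $C=2^{\mu-1}\vg(2\mu+1)/\vg(\mu+1)^2$. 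Using the functional equation $\vg(z+1)=z\vg(z)$, I would compute the ratios
\[
\frac{a_{n+1}}{a_n}=\frac{2\mu+n}{(\mu+n)^2},\qquad
\frac{b_{n+1}}{b_n}=\frac 2n,
\]
so the inductive step $a_{n+1}\leqq C\,b_{n+1}$ follows from $a_n\leqq C\,b_n$ provided that
\[
\frac{2\mu+n}{(\mu+n)^2}\leqq \frac 2n,
\]
i.e.\ provided $n(2\mu+n)\leqq 2(\mu+n)^2$. Expanding gives $2\mu n+n^2\leqq 2\mu^2+4\mu n+2n^2$, which rearranges to $0\leqq 2\mu^2+2\mu n+n^2$, and this holds for all $\mu\geqq0$ and $n\geqq1$. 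Thus the step is completely routine once the base case is in hand.

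The real work is the base case $n=1$, where the inequality reads
\[
\frac{\vg(2\mu+1)}{\vg(\mu+1)^2}\leqq \frac{2^{\mu-1}\vg(2\mu+1)}{\vg(\mu+1)^2}\cdot 2,
\]
which after cancellation is simply $1\leqq 2^{\mu}$. Since $\mu\geqq0$ this is immediate. Hence the base case holds trivially, and combined with the inductive step above the lemma follows for every $n\geqq1$.

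I expect the main (and essentially only) subtlety to be bookkeeping: making sure the constant $C$ is chosen so that the base case degenerates to $2^\mu\geqq1$ rather than to a genuinely harder estimate, and confirming that the telescoping ratio inequality $n(2\mu+n)\leqq 2(\mu+n)^2$ holds for the full range $\mu\geqq0$, $n\geqq1$ (it does, with room to spare, since the difference is $2\mu^2+2\mu n+n^2>0$). No appeal to the properties of the Bessel functions or to the earlier lemmas is needed; the statement is a self-contained Gamma-function estimate, and the factor $2^{\mu-1}$ on the right is exactly what the induction produces. An alternative, non-inductive route would be to write $\vg(2\mu+n)/\vg(\mu+n)^2$ using the duplication-type relation and bound it directly, but the induction is cleaner and makes the constant transparent, so that is the approach I would carry out.
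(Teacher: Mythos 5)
Your proof is correct and is essentially the paper's own argument: induction on $n$, with the base case $n=1$ reducing to $2^\mu\geqq 1$ and the inductive step reducing to the ratio inequality $n(2\mu+n)\leqq 2(\mu+n)^2$. The only (cosmetic) difference is that the paper verifies this last inequality via the two bounds $2\mu+n\leqq 2(\mu+n)$ and $\mu+n\geqq n$ rather than by expanding the square, as you do.
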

\begin{proof}
We prove \eqref{2.8} by induction.
It is easy to see \eqref{2.8} when $n=1$. We assume that \eqref{2.8} holds
for some integer $m\geqq 1$, and then have
\[
\frac{\vg(2\mu+m+1)}{\vg(\mu+m+1)^2}
=\frac{(2\mu+m)\vg(2\mu+m)}{(\mu+m)^2\vg(\mu+m)}
\leqq \frac{2^{\mu-1}\vg(2\mu+1)}{\vg(\mu+1)^2}\frac{(2\mu+m)2^n}{(\mu+m)^2\vg(m)}.
\]
Since $2\mu+m\leqq2(\mu+m)$ and $\mu+m\geqq m$, we obtain \eqref{2.8}
for $n=m+1$.
\end{proof}

\medskip

We now in a position to show Lemma \ref{Lemma 2.3}.
Let $\mu\geqq0$ and $n\geqq1$. Recall that, for $x>0$
\begin{equation}
I_{\mu+n}(x)\leqq \frac1{\vg(\mu+n+1)}
\biggl(\frac x2\biggr)^{\mu+n} e^x
\label{2.9}
\end{equation}
(cf. \cite{11}). By \eqref{2.3} and \eqref{2.4}, for $x>0$
\begin{equation}
\frac{K_{\mu+n+1}(x)}{K_{\mu+n}(x)}\leqq \frac{8(2x+1)^{\mu+n+1}(\mu+n)}x.
\label{2.10}
\end{equation}
It follows from \eqref{2.9} and \eqref{2.10} that
\begin{equation}
|\xi_{\mu,n}|\frac{K_{\mu+n+1}(x)}{K_{\mu+n}(x)}
\leqq \frac{8(2x+1)\vg(2\mu+n)}{n!\vg(\mu+n)^2 x}
\biggl\{\frac{(2x+1)r^2|v|^2}4\biggr\}^{\mu+n}e^{2r|v|}.
\label{2.11}
\end{equation}
By Lemma \ref{Lemma 2.6} there exists a constant $C$, independent of $n$,
such that
\[
|\xi_{\mu,n}|\frac{K_{\mu+n+1}(x)}{K_{\mu+n}(x)}
\leqq \frac C{n!(n-1)!}\biggl\{\frac{(2x+1)r^2|v|^2}2\biggr\}^n.
\]
This implies the first claim of Lemma \ref{Lemma 2.3}.

In virtue of \eqref{2.9}, we have that
\begin{equation}
|\zeta_{\mu,n}|\leqq \frac{\vg(2\mu+n)}{n!\vg(\mu+n)\vg(\mu+n+2)}
\biggl(\frac{r|v|}2\biggr)^{2\mu+2n+1}e^{2r|v|}.
\label{2.12}
\end{equation}
Since $\mu+n\geqq1$, $\vg(\mu+n+2)\geqq \vg(\mu+n)$.
We apply Lemma \ref{Lemma 2.6} again and
hence the right hand side of \eqref{2.12}
is bounded by a constant, which is independent of $n$, multiple of
\[
\frac 1{n!(n-1)!}\biggl(\frac{r^2|v|^2}2\biggr)^n.
\]
This yields the second claim of Lemma \ref{Lemma 2.3}.


\section{Laplace transform of $\L vd$}

\noindent
Our goal in this section is to prove Proposition \ref{Proposition 2.2} given in Section 2.
For an integer $n\geqq0$ and a real number $\mu\geqq0$ we write $C_n^\mu$
for the Gegenbauer polynomial. In the case of $\mu=0$ we have that $C_0^0(x)=1$ and
\[
C_n^0(x)=\sum_{m=0}^{[n/2]}\frac{(-1)^m\vg(n-m)}{\vg(m+1)\vg(n-2m+1)}(2x)^{n-2m}
\]
for $n\geqq1$, where $[x]$ is the integer
which is not larger than $x$. In the case of $\mu>0$ we have
\[
C_n^\mu(x)=\frac1{\vg(\mu)}\sum_{m=0}^{[n/2]}
\frac{(-1)^m\vg(\mu+n-m)}{\vg(m+1)\vg(n-2m+1)}(2x)^{n-2m}.
\]
More information on the Gegenbauer polynomials can be found in \cite{19}, for example.

We first give a formula for an improper integral concerning modified Bessel functions.

\medskip

\begin{lemma}\label{Lemma 3.1}
For $0<a<b$, $c>0$ and $\mu\geqq0$ let
\[
R_\mu(a,b;c)=\int_c^\infty x I_\mu(ax) K_\mu(bx) dx.
\]
Then we have
\[
R_\mu(a,b;c)=\frac{ac I_{\mu+1}(ac) K_\mu(bc)+bc I_\mu(ac) K_{\mu+1}(bc)}{b^2-a^2}.
\]
\end{lemma}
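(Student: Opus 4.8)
The plan is to exploit the fact that both factors in the integrand solve a second-order linear ODE of Sturm--Liouville type, so that $x I_\mu(ax) K_\mu(bx)$ becomes an exact derivative and the integral can be read off by the fundamental theorem of calculus. Writing $u(x)=I_\mu(ax)$ and $w(x)=K_\mu(bx)$, the modified Bessel equation (after the scalings $ax$ and $bx$) gives the two identities
\[
(x u')'=\Bigl(a^2 x+\frac{\mu^2}{x}\Bigr)u,\qquad
(x w')'=\Bigl(b^2 x+\frac{\mu^2}{x}\Bigr)w.
\]
Multiplying the first by $w$ and the second by $u$ and subtracting, the $\mu^2/x$ terms cancel and the left-hand side collapses into a single derivative, yielding the Lagrange identity
\[
\frac{d}{dx}\bigl[x\,(w u'-u w')\bigr]=(a^2-b^2)\,x\,u\,w .
\]
Integrating this from $c$ to $\infty$ expresses $R_\mu(a,b;c)$ as a boundary term divided by $a^2-b^2$.

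Next I would dispose of the contribution at the upper endpoint. From the large-argument asymptotics $I_\mu(z)\sim e^{z}/\sqrt{2\pi z}$ and $K_\mu(z)\sim\sqrt{\pi/(2z)}\,e^{-z}$, together with the corresponding estimates for $I_\mu'$ and $K_\mu'$, the bracket $x\,(w u'-u w')$ decays like $e^{(a-b)x}$ up to polynomial factors as $x\to\infty$; since $a<b$ by hypothesis this tends to $0$, and the same bound shows that the improper integral actually converges. Hence the upper boundary term drops out and
\[
R_\mu(a,b;c)=\frac{c}{b^2-a^2}\,\bigl(w u'-u w'\bigr)\big|_{x=c}
=\frac{c}{b^2-a^2}\Bigl[a\,I_\mu'(ac)\,K_\mu(bc)-b\,I_\mu(ac)\,K_\mu'(bc)\Bigr].
\]

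It then remains to simplify this Wronskian-type term using the derivative recurrences $I_\mu'(z)=I_{\mu+1}(z)+(\mu/z)I_\mu(z)$ and $K_\mu'(z)=-K_{\mu+1}(z)+(\mu/z)K_\mu(z)$. Substituting these and collecting terms, the two $(\mu/c)\,I_\mu(ac)K_\mu(bc)$ contributions cancel exactly, leaving $a\,I_{\mu+1}(ac)K_\mu(bc)+b\,I_\mu(ac)K_{\mu+1}(bc)$; multiplying by $c/(b^2-a^2)$ produces the asserted formula. The only genuine point requiring care is the justification that the boundary term at infinity vanishes and that the integral converges, both of which rest on the exponential gap $b>a$; the recurrence substitutions and the cancellation of the $\mu$-terms are then routine algebra. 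This argument also covers $\mu=0$, since the recurrences remain valid there (giving $I_0'=I_1$ and $K_0'=-K_1$).
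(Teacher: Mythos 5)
Your proof is correct, but it takes a genuinely different route from the paper. You put the two modified Bessel equations in Sturm--Liouville form, subtract to get the Lagrange identity
\[
\frac{d}{dx}\bigl[x\,(w u'-u w')\bigr]=(a^2-b^2)\,x\,u\,w ,
\]
integrate over $[c,\infty)$, kill the boundary term at infinity using the exponential gap $a<b$, and then convert the Wronskian-type bracket at $x=c$ into the stated form via the derivative recurrences $I_\mu'=I_{\mu+1}+(\mu/z)I_\mu$ and $K_\mu'=-K_{\mu+1}+(\mu/z)K_\mu$. This is the classical Lommel-integral argument: one exact-derivative identity does all the work, and it generalizes immediately to any pair of solutions of the two equations with distinct parameters. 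The paper instead integrates by parts twice, using the weighted derivative formulas $\frac d{dx}x^{-\mu}I_\mu(ax)=ax^{-\mu}I_{\mu+1}(ax)$ and $\frac d{dx}x^{\mu+1}K_{\mu+1}(bx)=-bx^{\mu+1}K_\mu(bx)$ (and their companions), which produces the coupled pair of relations
\[
R_\mu=\frac cb I_\mu(ac) K_{\mu+1}(bc)+\frac ab R_{\mu+1},\qquad
R_{\mu+1}=\frac cb I_{\mu+1}(ac) K_\mu(bc)+\frac ab R_\mu ,
\]
and then solves this linear system for $R_\mu$; the same large-$x$ asymptotics of $I_\eta$ and $K_\eta$ are used to discard the terms at infinity. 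The two arguments rest on the same analytic input (the Bessel recurrences and the decay of $I_\mu(ax)K_\mu(bx)$), but yours avoids introducing $R_{\mu+1}$ altogether and exposes the structural reason the closed form exists, while the paper's stays entirely within the integration-by-parts formulas it quotes from Watson. Both correctly handle the only delicate point, the vanishing of the boundary contribution at infinity.
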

\begin{proof}
We refer to \cite{23} about formulas for modified Bessel functions.
Among them, recall
\[
\frac d{dx}x^{-\mu} I_\mu(ax)=ax^{-\mu} I_{\mu+1}(ax),\quad
\frac d{dx}x^{\mu+1}K_{\mu+1}(bx)=-bx^{\mu+1}K_\mu(bx).
\]
Then the integration by parts gives
\[
R_\mu(a,b;c)
=-\frac xb I_\mu(ax) K_{\mu+1}(bx)\biggr|_c^\infty
+\frac ab \int_c^\infty x I_{\mu+1}(ax) K_{\mu+1}(bx)dx.
\]
Since, for each $\eta\geqq0$
\[
K_\eta(x)=\sqrt{\frac\pi{2x}}e^{-x}(1+o[1])\quad\text{and}\quad
I_\eta(x)=\frac1{\sqrt{2\pi x}}e^x(1+o[1])
\]
as $x\to\infty$, we have
\[
R_\mu(a,b;c)=\frac cb I_\mu(ac) K_{\mu+1}(bc)+\frac ab R_{\mu+1}(a,b;c).
\]
Similarly we can deduce that
\[
R_{\mu+1}(a,b;c)=\frac cb I_{\mu+1}(ac) K_\mu(bc)
+\frac ab R_\mu(a,b;c)
\]
from the formulas
\[
\frac d{dx}x^{\mu+1} I_{\mu+1}(ax)=ax^{\mu+1} I_\mu(ax),\quad
\frac d{dx}x^{-\mu}K_\mu(bx)=-bx^{-\mu}K_{\mu+1}(bx).
\]
Thus the assertion of this lemma can be easily obtained.
\end{proof}

\medskip

We are ready to prove Proposition \ref{Proposition 2.2}.
Similarly to Proposition 3.1 in \cite{5},
we can show that, for $\lambda>0$
\[
\LT{\L vd}(\lambda)=\frac 1\lambda
\int_{\r\setminus D} E_x[e^{-\lambda\tau_v}]dx.
\]
Recall that we have put $\nu=d/2-1$ and $\a=\sqrt{2\lambda+|v|^2}$.
Moreover we use $\b xy$ to denote $\ip xy/|x|\cdot |y|$ for $x,y\in\r\setminus\{0\}$.
The explicit form of $E_x[e^{-\lambda \tau_v}]$
given in \cite{11} immediately
yields that $\LT{\L v2}(\lambda)$ is the sum of
\begin{align}
&\frac1\lambda \int_{\rr\setminus D} e^{-\ip vx}I_0(r|v|)
\frac{K_0(|x|\a)}{K_0(r \a)}dx,\label{3.1}\\
&\frac1\lambda \int_{\rr\setminus D} e^{-\ip vx}\sum_{n=1}^\infty
n C_n^0 (\b vx) I_n(r|v|) \frac{K_n(|x|\a)}{K_n(r \a)}dx\label{3.2}
\end{align}
and that, if $d\geqq3$, $\LT{\L vd}(\lambda)$ is equal to
\begin{equation}
\begin{split}
\frac{2^\nu\vg(\nu)}\lambda\int_{\r\setminus D} e^{-\ip vx}\sum_{n=0}^\infty
&(\nu+n)C_n^\nu (\b vx)\\
&\times \frac{I_{\nu+n}(r|v|)}{(r|v|)^\nu}
\frac{|x|^{-\nu}K_{\nu+n}(|x|\a)}{r^{-\nu}K_{\nu+n}(r \a)}dx.
\end{split}
\label{3.3}
\end{equation}

We consider the case when $d\geqq3$ and justify the change of the order of
the integral and the summation in \eqref{3.3}. The dominated convergence theorem
implies that we can change the order once we prove the convergence of
\begin{equation}
\sum_{n=1}^\infty\frac{I_{\nu+n}(r|v|)}{(r|v|)^\nu}
\int_{\r\setminus D} e^{-\ip vx} (\nu+n)
\left| C_n^\nu (\b vx)\right|
\frac{|x|^{-\nu}K_{\nu+n}(|x|\a)}{r^{-\nu}K_{\nu+n}(r \a)}dx.
\label{3.4}
\end{equation}
The following estimate given in \cite{11}
is useful: for $\mu\geqq0$, $n\geqq1$ and $|y|\leqq1$
\begin{equation}
(\mu+n)\left| C_n^\mu (y)\right|
\leqq \frac{4^n \delta_\mu \vg(\mu+n+1)}{n!},
\label{3.5}
\end{equation}
where $\delta_0=1$ and $\delta_\mu=1/\vg(\mu)$ for $\mu>0$. It follows from
\eqref{2.9} and \eqref{3.5} that \eqref{3.4} is dominated by
\begin{equation}
\frac{r^\nu}{2^\nu \vg(\nu)}e^{r|v|}
\sum_{n=1}^\infty \frac{(2r|v|)^n}{n!}
\int_{\r\setminus D} e^{|v|\cdot |x|} |x|^{-\nu}
\frac{K_{\nu+n}(|x|\a)}{K_{\nu+n}(r \a)}dx.
\label{3.6}
\end{equation}
It follows from Lemma \ref{Lemma 2.5} that, for $a\geqq b>0$
and $\mu\geqq1/2$
\[
\frac{K_\mu(a)}{K_\mu(b)}
\leqq 4(2b+1)^\mu e^{-a+b}.
\]
Note that $\a>|v|$ for $\lambda>0$. Then we have that the integral
in \eqref{3.6} is dominated by
\begin{equation}
4(2r\a+1)^{\nu+n} e^{r\a}
\int_{\r\setminus D} |x|^{-\nu}e^{-|x|(\a-|v|)}dx.
\label{3.7}
\end{equation}
The polar coordinate transformation gives that the integral in \eqref{3.7} is
\[
S_{d-1}\int_r^\infty \rho^{\nu+1}e^{-(\a-|v|)\rho}d\rho
\leqq \frac{S_{d-1}\vg(\nu+2)}{(\a-|v|)^{\nu+2}}.
\]
Hence \eqref{3.6} and also \eqref{3.4} converge. This implies that
we can change the order of the integral and the summation
in \eqref{3.3}, and then we obtain that \eqref{3.3} is equal to
\begin{equation}
\begin{split}
\frac{2^\nu\vg(\nu)}{\lambda |v|^\nu}\sum_{n=0}^\infty
(&\nu+n)
\frac{I_{\nu+n}(r|v|)}{K_{\nu+n}(r \a)}\\
&\times \int_{\r\setminus D} e^{-\ip vx}
C_n^\nu (\b vx)|x|^{-\nu}K_{\nu+n}(|x|\a)dx.
\end{split}
\label{3.8}
\end{equation}

We can compute the integral explicitly.
Let $w=(1,0,\dots,0)\in\r$. We take the orthogonal matrix $T$
of order $d$ which satisfies $Tv=|v|w$.
The integral in \eqref{3.8} is equal to
\begin{equation}
\int_{\r\setminus D} e^{-\ip {Tv}{Tx}} C_n^\nu (\b {Tv}{Tx})
|Tx|^{-\nu}K_{\nu+n}(|Tx|\a)dx.
\label{3.9}
\end{equation}
By a change of variables given by $y=Tx$, \eqref{3.9} is equal to
\begin{equation}
\int_{\r\setminus D} e^{-|v|\ip wy} C_n^\nu (\b wy)
|y|^{-\nu}K_{\nu+n}(|y|\a)dy.
\label{3.10}
\end{equation}
We change variables from $(y_1,y_2,\dots,y_d)$ to
the polar coordinate $(\rho,\theta_1,\theta_2,\dots,\theta_{d-1})$
given by
\[
\begin{array}{l}
y_1=\rho \cos\theta_1,\\
y_2=\rho \sin\theta_1 \cos\theta_2,\\
\vdots\\
y_{d-1}=\rho \sin\theta_1 \sin\theta_2\cdots
\sin\theta_{d-2}\cos\theta_{d-1},\\
y_d=\rho \sin\theta_1 \sin\theta_2\cdots
\sin\theta_{d-2}\sin\theta_{d-1},\\
\rho\geqq0,\,\,0\leqq\theta_j\leqq\pi
\,(j=1,2,\dots, d-2),\,\,0\leqq\theta_{d-1}\leqq 2\pi.
\end{array}
\]
Since the Jacobian of this transform is
\[
\frac{\partial(y_1,y_2,y_3,\dots,y_d)}
{\partial(\rho,\theta_1,\theta_2,\dots,\theta_{d-1})}
=\rho^{d-1}\sin^{d-2}\theta_1 \sin^{d-3}\theta_2\cdots \sin\theta_{d-2},
\]
\eqref{3.10} coincides with
\begin{multline*}
\int_r^\infty d\rho\int_0^\pi d\theta_1 \int_0^\pi d\theta_2
\cdots \int_0^\pi d\theta_{d-2}\int_0^{2\pi}d\theta_{d-1}
e^{-|v|\rho\cos\theta_1}C_n^\nu(\cos\theta_1)\\
\times \rho^{\nu+1} K_{\nu+n}(\a \rho)
\sin^{d-2}\theta_1 \sin^{d-3}\theta_2
\cdots \sin\theta_{d-2},
\end{multline*}
which is the product of the following two integrals:
\begin{align}
&\int_r^\infty d\rho \int_0^\pi d\theta_1
e^{-|v|\rho\cos\theta_1}C_n^\nu(\cos\theta_1)\sin^{d-2}\theta_1
\rho^{\nu+1} K_{\nu+n}(\a\rho),
\label{3.11}\\
&\int_0^\pi d\theta_2 \cdots \int_0^\pi d\theta_{d-2}\int_0^{2\pi}d\theta_{d-1}
\sin^{d-3}\theta_2 \cdots \sin\theta_{d-2}.
\label{3.12}
\end{align}

The calculation of \eqref{3.12} is easy. Indeed, the polar coordinate transformation
gives that the volume of the unit ball in ${\mathbb R}^{d-1}$, denoted by $\omega_{d-1}$, is
\[
\begin{split}
&\int_0^1d\rho \int_0^\pi d\theta_1 \cdots
\int_0^\pi d\theta_{d-3}\int_0^{2\pi}d\theta_{d-2}
\rho^{d-2} \sin^{d-3}\theta_1 \cdots \sin\theta_{d-3}\\
&=\frac1{d-1}\int_0^\pi d\theta_1 \cdots
\int_0^\pi d\theta_{d-3}\int_0^{2\pi}d\theta_{d-2}
\sin^{d-3}\theta_1 \cdots \sin\theta_{d-3},
\end{split}
\]
which yields that \eqref{3.12} coincides with $(d-1)\omega_{d-1}=S_{d-2}$.

For \eqref{3.11} we use the following formula:
\[
\int_0^\pi e^{i z\cos\theta}C_n^\mu(\cos\theta) \sin^{2\mu}\theta d\theta
=\frac{2^\mu i^n\sqrt\pi \vg(\mu+1/2)\vg(2\mu+n)}{n!\,\vg(2\mu)}
\frac{J_{\mu+n}(z)}{z^\mu}
\]
for $\mu>0$, $n\geqq1$ and $z\in\mathbb C$ with $|\arg z|<\pi$
(cf. \cite[p.221]{19}),
where $J_\mu$ is the Bessel function of the first kind.
Applying the formula for $z=|v|\rho e^{i\pi/2}$ and $\mu=\nu$, we have that
\[
\int_0^\pi e^{-|v|\rho\cos\theta}C_n^\nu(\cos\theta)\sin^{2\nu}\theta d\theta=
\frac{2^\nu (-1)^n\sqrt\pi \vg(\nu+1/2)\vg(2\nu+n)}{n!\,\vg(2\nu)}
\frac{I_{\nu+n}(|v|\rho)}{(|v|\rho)^\nu}.
\]
Here we have used the relation
\begin{equation}
e^{i\pi\mu/2}I_\mu(x)=J_\mu(xe^{i\pi/2})
\label{3.13}
\end{equation}
for $x>0$ (cf. \cite[p.77]{23}). Hence it follows from \eqref{2.7} that
\eqref{3.11} is
\[
\frac{(-1)^n\pi\vg(2\nu+n)}{2^{\nu-1}n!\,\vg(\nu) |v|^\nu}
\int_r^\infty \rho I_{\nu+n}(|v|\rho)K_{\nu+n}(\a\rho)d\rho,
\]
which yields that \eqref{3.8} and also $\LT{\L vd}(\lambda)$ are equal to
\[
\frac{2\pi S_{d-2}}{\lambda |v|^{2\nu}}\sum_{n=0}^\infty
\frac{(-1)^n (\nu+n)\vg(2\nu+n)}{n!}\frac{I_{\nu+n}(r|v|)}{K_{\nu+n}(r\a)}
R_{\nu+n}(|v|,\a;r).
\]
It follows from Lemma \ref{Lemma 3.1} that
\[
\LT{\L vd}(\lambda)=
\frac{\pi S_{d-2}r}{\lambda^2 |v|^{2\nu}}\sum_{n=0}^\infty
\biggl\{\a\xi_{\nu,n}
\frac{K_{\nu+n+1}(r\a)}{K_{\nu+n}(r\a)}
+|v|\zeta_{\nu,n}\biggr\}.
\]
Lemma \ref{Lemma 2.3} completes the proof of
Proposition \ref{Proposition 2.2} when $d\geqq3$.

We next prove Proposition \ref{Proposition 2.2} in the case of $d=2$.
Since the calculation is similar to \eqref{3.3},
we shall give the outline of the proof.
Details are left to the reader. It is easy to see that we can change
the order of the integral and the summation by applying \eqref{2.9} and \eqref{3.5}.
Then it follows that \eqref{3.2} is
\begin{equation}
\frac 1\lambda \sum_{n=1}^\infty \frac{nI_n(r|v|)}{K_n(r\a)}
\int_{\rr\setminus D}e^{-\ip vx}C_n^0(\b vx)K_n(|x|\a)dx.
\label{3.14}
\end{equation}
We note that $\LT{\L v2}(\lambda)$ is the sum of \eqref{3.1} and \eqref{3.14}.
Similarly to \eqref{3.8}, we apply an orthogonal transform and
the polar coordinate transformation. Then we obtain that \eqref{3.1} is
\begin{equation}
\frac 1\lambda \frac{I_0(r|v|)}{K_0(r\a)}\int_r^\infty d\rho \int_0^{2\pi}d\theta\,
e^{-|v|\rho\cos\theta}K_0(\a\rho)
\label{3.15}
\end{equation}
and that \eqref{3.14} is
\[
\frac1\lambda \sum_{n=1}^\infty \frac{nI_n(|v|r)}{K_n(r\a)}
\int_r^\infty d\rho \int_0^{2\pi}d\theta\,
e^{-|v|\rho\cos\theta}C_n^0(\cos\theta)K_n(\a\rho),
\]
which coincides with
\begin{equation}
\frac2\lambda \sum_{n=1}^\infty \frac{I_n(|v|r)}{K_n(r\a)}
\int_r^\infty d\rho \int_0^{2\pi}d\theta\,
e^{-|v|\rho\cos\theta}\cos(n\theta) K_n(\a\rho).
\label{3.16}
\end{equation}
Here we have used the formula $C_n^0(\cos\theta)=2\cos(n\theta)/n$ for $n\geqq1$
(cf. \cite[p.218]{19}).
Recall that, for $n\geqq0$ and $z\in\mathbb C$ with $|\arg z|<\pi$
$$
\int_0^\pi e^{iz\cos\theta}\cos(n\theta)d\theta=\pi i^n J_n(z)
$$
(cf. \cite[p.79]{19}). Combining this formula with \eqref{3.13},
we obtain
\[
\int_0^{2\pi} e^{-|v|\rho \cos\theta} \cos(n\theta)d\theta
=2(-1)^n \pi I_n(|v|\rho).
\]
Hence \eqref{3.15} and \eqref{3.16} are
\[
\frac{2\pi}\lambda \frac{I_0(r|v|)}{K_0(r\a)}
R_0(|v|,\a;r)
\]
and
\[
\frac{4\pi}\lambda \sum_{n=1}^\infty \frac{(-1)^n I_n(|v|r)}{K_n(r\a)}
R_n(|v|,\a;r),
\]
respectively. Lemmas \ref{Lemma 2.3} and \ref{Lemma 3.1}
give the claim of Proposition \ref{Proposition 2.2} when $d=2$.


\section{Large time asymptotics}

\noindent
This section deals with the first approximation of $\L vd(t)$ for large $t$.
When $v=0$, we find in \cite{21} that, if $d=2$,
\[
\lim_{t\to\infty}\frac{\log t}t\L 02(t)=2\pi
\]
and that, if $d\geqq3$,
\[
\lim_{t\to\infty}\frac{\L 0d(t)}t=\frac{(d-2)S_{d-1}r^{d-2}}2,
\]
which coincides with the Newtonian capacity of $D$. Moreover
the several smaller terms of $\L 0d(t)$ are given in \cite{6, 7, 10, 17, 18}.

We consider the same problem in the case of $v\neq0$ and put
\[
\n vd=
\begin{cases}
\dis \frac{\pi I_0(r|v|)}{K_0(r|v|)}+2\pi\sum_{n=1}^\infty
\frac{(-1)^n I_n(r|v|)}{K_n(r|v|)}&\quad\text{if $d=2$,}\\
\dis \frac{\pi S_{d-2}}{|v|^{2\nu}}\sum_{n=0}^\infty\frac{(-1)^n(\nu+n)\vg(2\nu+n)}{n!}
\frac{I_{\nu+n}(r|v|)}{K_{\nu+n}(r|v|)}&\quad\text{if $d\geqq3$.}
\end{cases}
\]
It is easy to show that the right hand side converges absolutely.
It follows form \eqref{2.4} and \eqref{2.9} that, for $n\geqq1$
$$
\frac{I_{\nu+n}(r|v|)}{K_{\nu+n}(r|v|)}
\leqq \frac{(r|v|)^{2\nu+2n}}{2^{2\nu+2n-1}\vg(\nu+n)\vg(\nu+n+1)}e^{2r|v|}.
$$
Hence, applying Lemma \ref{Lemma 2.6}, we have
\begin{equation}
\frac{(\nu+n)\vg(2\nu+n)}{n!}\frac{I_{\nu+n}(r|v|)}{K_{\nu+n}(r|v|)}
\leqq \frac{(r|v|)^{2\nu}\vg(2\nu+1)}{2^\nu \vg(\nu+1)^2}
\frac{(r|v|)^{2n}}{2^n n!(n-1)!}e^{2r|v|}.
\label{4.1}
\end{equation}

The result in this section is the following.

\medskip

\begin{thm}\label{Theorem 4.1}
Let $v\neq0$. We have that
\[
\lim_{t\to\infty}\frac{\L vd(t)}t=\n vd.
\]
\end{thm}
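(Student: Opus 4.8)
The plan is to start from the closed form of Theorem~\ref{Theorem 2.1}, divide by $t$, and let $t\to\infty$. The second term there is already linear in $t$, so dividing by $t$ and passing to the limit leaves the constant $\frac{\pi S_{2\nu}r}{|v|^{2\nu-1}}\sum_{n=0}^\infty\zeta_{\nu,n}$ untouched. Everything else reduces to the large-$t$ behaviour of $\vdl{d+2n}/t$ inside the first sum, so the first task is to compute $\lim_{t\to\infty}\vdl m/t$ for each shifted dimension $m=d+2n$.

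Since $\vd ms=e^{-|v|^2s/2}\L 0m(s)$ and $\L 0m(s)$ grows only polynomially in $s$ (at most linearly, up to a logarithm when $m=2$), the factor $e^{-|v|^2s/2}$ forces exponential decay; hence $\vd mt\to0$, and both $\int_0^\infty\vd ms\,ds$ and $\int_0^\infty s\,\vd ms\,ds$ converge. Writing $\int_0^t(t-s)\vd ms\,ds=t\int_0^t\vd ms\,ds-\int_0^t s\,\vd ms\,ds$ and dividing by $t$, only the piece $\frac14|v|^4\cdot t\int_0^t\vd ms\,ds$ contributes in the limit, so that
\[
\lim_{t\to\infty}\frac{\vdl m}{t}=\frac{|v|^4}{4}\int_0^\infty\vd ms\,ds=\frac{|v|^4}{4}\LT{\L 0m}(|v|^2/2).
\]
Evaluating the explicit Laplace transform recalled in the proof of Lemma~\ref{Lemma 2.4} at $\sqrt{2\lambda}=|v|$ gives $\int_0^\infty\vd ms\,ds=\frac{2S_{m-1}r^{m-1}}{|v|^3}K_{m/2}(r|v|)/K_{m/2-1}(r|v|)$. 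Substituting $m=d+2n$ (so that $m/2-1=\nu+n$ and $S_{m-1}=S_{2\nu+2n+1}$) and simplifying against the prefactor $\frac{2\pi S_{2\nu}}{|v|^{2\nu}}\xi_{\nu,n}/(S_{2\nu+2n+1}r^{2\nu+2n})$ collapses the spherical-area and power-of-$r$ factors, leaving the clean term $\frac{\pi S_{2\nu}r}{|v|^{2\nu-1}}\xi_{\nu,n}K_{\nu+n+1}(r|v|)/K_{\nu+n}(r|v|)$.

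The main obstacle is justifying the interchange of $\lim_{t\to\infty}$ with the infinite sum, and this is exactly where Lemma~\ref{Lemma 2.3} is indispensable. I would establish a domination uniform for $t\geq1$: since $\vd ms\geq0$ and $(t-s)/t\leq1$, one has $\vdl m/t\leq\vd mt+(|v|^2+\frac14|v|^4)\int_0^\infty\vd ms\,ds$, while the remaining term is controlled by $\vd mt\leq\frac{|v|^2}{2(1-e^{-|v|^2/2})}\int_t^{2t}\vd ms\,ds\leq\frac{|v|^2}{2(1-e^{-|v|^2/2})}\int_0^\infty\vd ms\,ds$, using that $\L 0m$ is nondecreasing. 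Hence $\vdl m/t\leq C\int_0^\infty\vd ms\,ds$ with $C$ depending only on $|v|$. Multiplying by $|\xi_{\nu,n}|/(S_{2\nu+2n+1}r^{2\nu+2n})$ and inserting the Laplace-transform value produces, up to a constant independent of $n$ and $t$, the summand $|\xi_{\nu,n}|K_{\nu+n+1}(r|v|)/K_{\nu+n}(r|v|)$, whose series converges by Lemma~\ref{Lemma 2.3}. Dominated convergence for series then licenses passing the limit inside.

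Combining the two pieces yields
\[
\lim_{t\to\infty}\frac{\L vd(t)}t=\frac{\pi S_{2\nu}r}{|v|^{2\nu-1}}\sum_{n=0}^\infty\Bigl(\xi_{\nu,n}\frac{K_{\nu+n+1}(r|v|)}{K_{\nu+n}(r|v|)}+\zeta_{\nu,n}\Bigr),
\]
and the closing step is purely algebraic. Factoring $I_{\nu+n}(r|v|)/K_{\nu+n}(r|v|)$ out of each summand and using the standard Wronskian relation $I_\mu(x)K_{\mu+1}(x)+I_{\mu+1}(x)K_\mu(x)=1/x$ with $x=r|v|$ collapses the bracket to $\frac{1}{r|v|}\cdot\frac{(-1)^n(\nu+n)\vg(2\nu+n)}{n!}I_{\nu+n}(r|v|)/K_{\nu+n}(r|v|)$; the surviving constants rearrange to $\frac{\pi S_{2\nu}}{|v|^{2\nu}}$, which is precisely $\n vd$ for $d\geq3$ (recall $S_{2\nu}=S_{d-2}$). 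For $d=2$ the same identity applies, the $n=0$ term carrying the extra factor $\frac12$ from $\xi_{0,0},\zeta_{0,0}$ together with $S_0=2$, and reproduces $\n v2$; I expect this case to need only a line or two at the end.
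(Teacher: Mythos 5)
Your proof is correct, but it follows a genuinely different route from the paper's. The paper deliberately avoids the time-domain formula of Theorem \ref{Theorem 2.1} --- it even remarks that extracting the leading term from that representation is difficult --- and instead proves Lemma \ref{Lemma 4.2}, namely $\lambda^2 \LT{\L vd}(\lambda)\to\n vd$ as $\lambda\downarrow0$, working from the Laplace-transform expression \eqref{2.1} with a domination in $n$, uniform in small $\lambda$, supplied by \eqref{2.11} and Lemma \ref{Lemma 2.6}; Theorem \ref{Theorem 4.1} then follows by Karamata's Tauberian theorem, applicable because $\L vd$ is nondecreasing. You instead carry out precisely the direct argument the authors side-stepped: your key observation that $\int_0^\infty \vd ms\,ds=\LT{\L 0m}(|v|^2/2)=2S_{m-1}r^{m-1}|v|^{-3}K_{m/2}(r|v|)/K_{m/2-1}(r|v|)$ turns $\lim_{t\to\infty}\vdl m/t$ into an exact Bessel ratio, and your $t$-uniform domination (using monotonicity of $\L 0m$ to get $\vd mt\leqq\frac{|v|^2}{2(1-e^{-|v|^2/2})}\int_0^\infty \vd ms\,ds$ for $t\geqq1$) together with Lemma \ref{Lemma 2.3} legitimizes interchanging $t\to\infty$ with the sum over $n$; I checked the constants and they come out right, including the $d=2$ case with its halved $n=0$ coefficients. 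From that point on the two proofs coincide: the Wronskian identity $I_\mu(x)K_{\mu+1}(x)+I_{\mu+1}(x)K_\mu(x)=1/x$ collapses $\xi_{\nu,n}K_{\nu+n+1}(r|v|)/K_{\nu+n}(r|v|)+\zeta_{\nu,n}$ into the terms of $\n vd$, which is exactly the paper's computation of $\phi_{\nu,n}$ at the end of Lemma \ref{Lemma 4.2}. As for what each approach buys: the paper outsources the passage from Laplace asymptotics to time asymptotics to a standard Tauberian theorem and therefore never needs estimates uniform in $t$, while your proof is self-contained and shows the ``difficult'' direct derivation is feasible, at the cost of the extra bookkeeping in the domination step. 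The only loose end on your side is the casual claim that $\L 0m(s)$ grows at most polynomially; this is standard, and in fact dispensable, since finiteness of $\LT{\L 0m}(|v|^2/2)$ together with your monotonicity bound already yields $\vd mt\to0$ and the convergence of the integrals you use.
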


\medskip

In principle, we can derive the principal term of $\L vd(t)$
by its explicit form given in Theorem 2.1. However it is difficult
to obtain it since the representation is complicated.
The main tool to see Theorem \ref{Theorem 4.1} here is the Tauberian theorem.
We study asymptotic behavior of $\LT{\L vd}(\lambda)$
for small $\lambda$.

\medskip

\begin{lemma}\label{Lemma 4.2}
Let $v\ne0$. If $d\geqq2$, then
\[
\lim_{\lambda\downarrow 0}\lambda^2 \LT{\L vd}(\lambda)=\n vd.
\]
\end{lemma}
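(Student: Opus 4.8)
The plan is to read the limit off directly from the closed form of $\LT{\L vd}$ in Proposition \ref{Proposition 2.2}. Multiplying \eqref{2.1} by $\lambda^2$ and using that $\lambda^2 F_{\nu+n}(\lambda)=\a K_{\nu+n+1}(r\a)/K_{\nu+n}(r\a)$, where $\a=\sqrt{2\lambda+|v|^2}\downarrow|v|$ as $\lambda\downarrow0$, one obtains
\[
\lambda^2\LT{\L vd}(\lambda)
=\frac{\pi S_{2\nu}r}{|v|^{2\nu}}\sum_{n=0}^\infty\xi_{\nu,n}\,\a\frac{K_{\nu+n+1}(r\a)}{K_{\nu+n}(r\a)}
+\frac{\pi S_{2\nu}r}{|v|^{2\nu-1}}\sum_{n=0}^\infty\zeta_{\nu,n}.
\]
The second sum carries no $\lambda$, so the whole matter reduces to passing $\lim_{\lambda\downarrow0}$ through the first series.

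First I would justify this interchange, which is the only genuine obstacle; everything after it is algebra. For $\lambda\in(0,\lambda_0]$ one has $\a\in(|v|,\a_0]$ with $\a_0=\sqrt{2\lambda_0+|v|^2}$, and the estimates in the proof of Lemma \ref{Lemma 2.3} (through \eqref{2.9}--\eqref{2.11} together with Lemma \ref{Lemma 2.6}) bound $|\xi_{\nu,n}|K_{\nu+n+1}(r\a)/K_{\nu+n}(r\a)$ by $\frac{C}{n!(n-1)!}\bigl\{(2r\a_0+1)r^2|v|^2/2\bigr\}^n$, a summable sequence independent of $\lambda$. Multiplying by $\a\leq\a_0$ preserves summability of this dominating bound, so by the Weierstrass $M$-test the first series converges uniformly on $(0,\lambda_0]$ and the limit may be taken term by term. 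Using continuity and positivity of $K_\mu$ on $(0,\infty)$, each factor $\a K_{\nu+n+1}(r\a)/K_{\nu+n}(r\a)\to|v|K_{\nu+n+1}(r|v|)/K_{\nu+n}(r|v|)$.

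Taking the limit and combining the two sums, while writing $1/|v|^{2\nu-1}=|v|/|v|^{2\nu}$, gives
\[
\lim_{\lambda\downarrow0}\lambda^2\LT{\L vd}(\lambda)
=\frac{\pi S_{2\nu}r|v|}{|v|^{2\nu}}\sum_{n=0}^\infty
\frac{\xi_{\nu,n}K_{\nu+n+1}(r|v|)+\zeta_{\nu,n}K_{\nu+n}(r|v|)}{K_{\nu+n}(r|v|)}.
\]
Finally I would invoke the cross-product identity $I_\mu(x)K_{\mu+1}(x)+I_{\mu+1}(x)K_\mu(x)=1/x$ with $x=r|v|$. For $n\geq1$ the definitions of $\xi_{\nu,n}$ and $\zeta_{\nu,n}$ share the factor $(-1)^n(\nu+n)\vg(2\nu+n)I_{\nu+n}(r|v|)/n!$, so the numerator collapses to that factor times $[I_{\nu+n}K_{\nu+n+1}+I_{\nu+n+1}K_{\nu+n}](r|v|)=1/(r|v|)$; the prefactor $\pi S_{2\nu}r|v|/|v|^{2\nu}$ then cancels $1/(r|v|)$ and produces exactly the $n$-th summand of $\n vd$, recalling $S_{2\nu}=S_{d-2}$. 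The $n=0$ term is treated identically: for $d\geq3$ it is covered by the general formula, whereas for $d=2$ the halved values $\xi_{0,0},\zeta_{0,0}$ are precisely what turns the coefficient $2\pi$ into the $\pi$ appearing in the leading term of $\n v2$. This recovers $\n vd$ in both the $d=2$ and $d\geq3$ forms, which completes the proof.
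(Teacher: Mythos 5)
Your proposal is correct and takes essentially the same route as the paper's own proof: multiply \eqref{2.1} by $\lambda^2$, pass the limit $\lambda\downarrow0$ through the series using the domination coming from \eqref{2.11} and Lemma \ref{Lemma 2.6} (the paper phrases this as dominated convergence on $0<\lambda<1$, your Weierstrass $M$-test on $(0,\lambda_0]$ is the same estimate), and then collapse $\xi_{\nu,n}K_{\nu+n+1}+\zeta_{\nu,n}K_{\nu+n}$ via the identity $I_\mu(z)K_{\mu+1}(z)+I_{\mu+1}(z)K_\mu(z)=1/z$, treating the halved $(\nu,n)=(0,0)$ terms separately for $d=2$. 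No gaps; your final algebra recovering $\n vd$ matches the paper's computation of $\phi_{\nu,n}$ exactly.
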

\begin{proof}
Recall that $\LT{\L vd}(\lambda)$ is represented by \eqref{2.1}.
We first observe the asymptotic behavior of
\begin{equation}
\lambda^2 \sum_{n=0}^\infty \xi_{\nu,n} F_{\nu+n}(\lambda).
\label{4.2}
\end{equation}
The definition of $F_\mu$ immediately yields
\[
\lim_{\lambda\downarrow0}\lambda^2 F_{\nu+n}(\lambda)
=\frac{|v|K_{\nu+n+1}(r|v|)}{K_{\nu+n}(r|v|)}.
\]
On the other hand, it follows from \eqref{2.11} and
Lemma \ref{Lemma 2.6} that, for $n\geqq1$
\[
\begin{split}
|\xi_{\nu,n}|\lambda^2  F_{\nu+n}(\lambda)
&=|\xi_{\nu,n}|
\frac{\a K_{\nu+n+1}(r\a)}{K_{\nu+n}(r\a)}\\
&\leqq \frac{4\vg(2\nu+1)e^{2r|v|}}{r\vg(\nu+1)^2}
\frac 1{n!(n-1)!}\biggl\{\frac{(2r\a+1)r^2|v|^2}2\biggr\}^{\nu+n}
\end{split}
\]
Note that $0\leqq \a\leqq a_{1,v}$ for $0<\lambda<1$ and
the right hand side is bounded by 
\begin{equation}
\frac {\kappa_1}{n!(n-1)!}\biggl\{\frac{(2ra_{1,v}+1)r^2|v|^2}2\biggr\}^n
\label{4.3}
\end{equation}
for some positive constant $\kappa_1$, independent of $\lambda$ and $n$.
Since \eqref{4.3} is independent of $\lambda$ and the summation of
\eqref{4.3} on $n$ over $[1,\infty)$ converges, \eqref{4.2} converges to
\[
\sum_{n=0}^\infty \xi_{\nu+n}\frac{|v|K_{\nu+n+1}(r|v|)}{K_{\nu+n}(r|v|)}.
\]
as $\lambda\downarrow0$. This yields that
\[
\lim_{\lambda\downarrow0}\lambda^2\LT{\L vd}(\lambda)
=\frac{\pi S_{2\nu}r}{|v|^{2\nu-1}}
\sum_{n=0}^\infty \xi_{\nu+n}\frac{K_{\nu+n+1}(r|v|)}{K_{\nu+n}(r|v|)}
+\frac{\pi S_{2\nu}r}{|v|^{2\nu-1}} \sum_{n=0}^\infty \zeta_{\nu,n}.
\]
By Lemma \ref{Lemma 2.3} the right hand side is equal to
\begin{equation}
\frac{\pi S_{2\nu}r}{|v|^{2\nu-1}} \sum_{n=0}^\infty
\biggl\{ \xi_{\nu+n}\frac{K_{\nu+n+1}(r|v|)}{K_{\nu+n}(r|v|)}+ \zeta_{\nu,n}\biggr\}.
\label{4.4}
\end{equation}

For the sake of convenience we put
\[
\phi_{\nu,n}=\xi_{\nu+n}
\frac{K_{\nu+n+1}(r|v|)}{K_{\nu+n}(r|v|)}+ \zeta_{\nu,n}.
\]
It follows that, unless $\nu=n=0$,
\[
\begin{split}
\phi_{\nu,n}=&
\frac{(-1)^n (\nu+n)\vg(2\nu+n)}{n!}
\frac{I_{\nu+n}(r|v|)}{K_{\nu+n}(r|v|)}\\
&\hskip8mm \times \{K_{\nu+n+1}(r|v|)I_{\nu+n}(r|v|)+
K_{\nu+n}(r|v|)I_{\nu+n+1}(r|v|)\}.
\end{split}
\]
By the formula
\[
K_{\mu+1}(z)I_\mu(z)+K_\mu(z)I_{\mu+1}(z)=\frac1z
\]
(cf. \cite[p.80]{23}), if $(\nu, n)\neq (0,0)$, then
\[
\phi_{\nu,n}=\frac{(-1)^n (\nu+n)\vg(2\nu+n)}{r|v|n!}
\frac{I_{\nu+n}(r|v|)}{K_{\nu+n}(r|v|)}.
\]
Moreover, in the case of $\nu=n=0$, we similarly have
$$
\phi_{0,0}=\frac1{2r|v|}\frac{I_0(r|v|)}{K_0(r|v|)}.
$$
These immediately imply that \eqref{4.4} is equal to $\n vd$.
\end{proof}

\medskip

The remainder of this section is devoted to give
the limiting value of the leading term of $\L vd(t)/t$
as $|v|$ tends to $0$.

\medskip

\begin{thm}\label{Theorem 4.3}
We have that
\[
\lim_{v\to0}\lim_{t\to\infty}\frac{\L vd(t)}t=\n 0d,
\]
where
\[
\n 0d=
\begin{cases}
0&\quad\text{if $d=2$,}\\
\dis \frac{(d-2)S_{d-1}r^{d-2}}2&\quad\text{if $d\geqq3$.}
\end{cases}
\]
\end{thm}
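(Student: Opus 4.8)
The plan is to reduce the double limit to a single computation and then carry out the $|v|\to0$ asymptotics of $\n vd$. By Theorem \ref{Theorem 4.1} the inner limit equals $\n vd$ for every $v\neq0$, so it suffices to prove $\lim_{v\to0}\n vd=\n 0d$. I would analyze the two defining series of $\n vd$ termwise, using the small-argument behaviour $I_\mu(x)\sim(x/2)^\mu/\vg(\mu+1)$ as $x\downarrow0$, together with $K_0(x)\to\infty$ and $K_\mu(x)\sim\tfrac12\vg(\mu)(x/2)^{-\mu}$ for $\mu>0$.

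For $d=2$ (so $\nu=0$) the first term $\pi I_0(r|v|)/K_0(r|v|)$ vanishes because $I_0(r|v|)\to1$ while $K_0(r|v|)\to\infty$, and for $n\geqq1$ the ratio $I_n(r|v|)/K_n(r|v|)$ is of order $|v|^{2n}$ by \eqref{2.4} and \eqref{2.9}. Those same bounds dominate the $n$-th summand uniformly for $0<|v|\leqq1$ by a summable sequence, so dominated convergence lets me take the limit inside the sum and conclude $\lim_{v\to0}\n v2=0=\n 02$.

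For $d\geqq3$ (so $\nu>0$) the crucial point is that only the $n=0$ term survives. Indeed, \eqref{4.1} multiplied by $\pi S_{d-2}/|v|^{2\nu}$ bounds the $n$-th summand by a constant times $(r|v|)^{2n}/(2^nn!(n-1)!)$, which is summable and tends to $0$ for each fixed $n\geqq1$; dominated convergence again justifies passing to the limit termwise. In the $n=0$ term the factor $|v|^{-2\nu}$ cancels exactly against the $(r|v|)^{2\nu}$ produced by $I_\nu(r|v|)/K_\nu(r|v|)\sim 2(r|v|/2)^{2\nu}/(\vg(\nu+1)\vg(\nu))$, which leaves
\[
\lim_{v\to0}\n vd
=\pi S_{d-2}\,\nu\vg(2\nu)\,\frac{2r^{2\nu}}{2^{2\nu}\vg(\nu+1)\vg(\nu)}
=\frac{\pi S_{d-2}r^{2\nu}\vg(2\nu)}{2^{2\nu-1}\vg(\nu)^2},
\]
where I used $\nu\vg(\nu)=\vg(\nu+1)$.

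The remaining step is pure Gamma-function bookkeeping. Applying the duplication formula \eqref{2.7} with $z=\nu$ gives $\vg(2\nu)=2^{2\nu-1}\vg(\nu)\vg(\nu+1/2)/\sqrt\pi$, so the limit becomes $\sqrt\pi\,S_{d-2}r^{2\nu}\vg(\nu+1/2)/\vg(\nu)$. Substituting $2\nu=d-2$, $\nu+1/2=(d-1)/2$, $\nu=(d-2)/2$ and $S_{d-2}=2\pi^{(d-1)/2}/\vg((d-1)/2)$, and finally using $\vg(d/2)=\tfrac{d-2}{2}\vg((d-2)/2)$, collapses this to $(d-2)S_{d-1}r^{d-2}/2=\n 0d$, as desired. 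The only genuine obstacle is the interchange of limit and summation, and that is handled directly by the uniform bound \eqref{4.1}; everything else is asymptotics of Bessel functions and Gamma identities.
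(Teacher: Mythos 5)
Your proposal is correct and follows essentially the same route as the paper: reduce via Theorem \ref{Theorem 4.1} to showing $\lim_{v\to0}\n vd=\n 0d$, use the small-argument asymptotics of $I_\mu$ and $K_\mu$ so that only the $n=0$ term survives when $d\geqq3$ (and nothing survives when $d=2$), justify the termwise limit by dominated convergence with the bound \eqref{4.1}, and finish with the duplication formula \eqref{2.7} and Gamma-function identities to recover $(d-2)S_{d-1}r^{d-2}/2$. Your bookkeeping is in fact slightly cleaner than the paper's, which drops the factor $r^{2\nu}$ in its intermediate expression \eqref{4.5} (a typo that you correctly avoid).
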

\begin{proof}
We show that $\n vd$ converges to $\n 0d$ as $|v|$ varnishes
with the help of the dominated convergence theorem.

It is known that, as $x\downarrow0$,
\[
\begin{split}
&I_\mu(x)=\frac{x^\mu}{2^\mu \vg(\mu+1)}(1+o[1]),\\
&K_\mu(x)=
\begin{cases}
\dis \frac1{\log(1/x)}(1+o[1])\quad&\text{if $\mu=0$,}\\
\dis \frac{2^{\mu-1}\vg(\mu)}{x^\mu}(1+o[1])\quad&\text{if $\mu>0$}
\end{cases}
\end{split}
\]
(cf. \cite[p.136]{14}). Hence we have that
\[
\lim_{|v|\to0}\frac{I_{\nu+n}(r|v|)}{|v|^{2\nu}K_{\nu+n}(r|v|)}=
\begin{cases}
\dis\frac{r^{2\nu}}{2^{2\nu-1}\vg(\nu)\vg(\nu+1)}\quad&\text{if $\nu>0$ and $n=0$,}\\
0\quad&\text{otherwise.}
\end{cases}
\]
Moreover we deduce from \eqref{4.1} that, for $0<|v|<1/r$
\[
\frac{(\nu+n)\vg(2\nu+n)}{|v|^{2\nu}n!}\frac{I_{\nu+n}(r|v|)}{K_{\nu+n}(r|v|)}
\leqq\frac{\kappa_2}{2^n n!(n-1)!},
\]
where $\kappa_2$ is a constant which depends on only $r$ and $\nu$.
The dominated convergence theorem yields that
\[
\lim_{v\to0}\n v2=0
\]
and that, if $d\geqq3$,
\[
\lim_{v\to0}\n vd=\frac{\pi S_{d-2} \nu\vg(2\nu) r^{2\nu}}{2^{2\nu-1}\vg(\nu)\vg(\nu+1)}.
\]
It follows from \eqref{2.7} that the right hand side is equal to
\begin{equation}
\frac{\sqrt \pi S_{d-2}\nu\vg(\nu+1/2)}{\vg(\nu+1)}.
\label{4.5}
\end{equation}
Recall that $\nu=d/2-1$ and $S_{m-1}=2\pi^{m/2}{\vg(m/2)}$ for an integer $m\geqq2$.
We can easily obtain that \eqref{4.5} is $(d-2)S_{d-1}r^{d-2}/2$.
\end{proof}

\medskip

We consider $\n vd$ as a function of $v$ on $\r$.
In the similar way as Theorem \ref{Theorem 4.3}, we can see that $\n vd$ is continuous on $\r$.
The calculation is left to the reader.

\medskip

\begin{remark}\label{Remark 4.4}
When $A$ is a non-polar compact set, the law of large numbers of
the volume of $W(t;A,B_v)$ has been already established.
For details see \cite{8, 13, 22}. Theorem \ref{Theorem 4.1} yields that
\[
\lim_{t\to\infty}\frac{\vl(W(t;D,B_v))}t=\n vd
\]
almost surely. 
\end{remark}



\begin{thebibliography}{99}

\bibitem{1} M. van den Berg, E. Bolthausen and F. den Hollander,
\textit{Moderate deviations for the volume of the Wiener sausage,}
Ann. of Math. (2)
\textbf{153} (2001) 355--406.


\bibitem{2} M. D. Donsker and S. R. S. Varadhan,
\textit{Asymptotics for the Wiener sausage,}
Com\. Pure App\. Math.
\textbf{28} (1975) 525--565.

\bibitem{3} R. K. Getoor,
\textit{Some asymptotic formulas involving capacity,} 
Z. Wahr. Verw. Gebiete
\textbf{4} (1965) 248--252.

\bibitem{4} Y. Hamana,
\textit{Limit theorems for the Wiener sausage,}
Sugaku Expositions
\textbf{18} (2005) 53--73.

\bibitem{5} Y. Hamana,
\textit{On the expected volume of the Wiener sausage,}
J. Math. Soc. Japan
\textbf{62} (2010) 1113--1136.

\bibitem{6} Y. Hamana,
\textit{The expected volume and surface area of the Wiener sausage
in odd dimensions,}
Osaka J. Math.
\textbf{49} (2012) 853--868.

\bibitem{7} Y. Hamana,
\textit{Asymptotic expansion of the expected volume of
the Wiener sausage in even dimensions,}
Kyushu J. Math. (toappear)

\bibitem{8} Y. Hamana and H. Kesten,
\textit{A large deviation result for the range of random
walks and for the Wiener sausage,}
Probab. Th. Rel. Fields
\textbf{120} (2001) 183--208.


\bibitem{9} Y. Hamana and H. Matsumoto,
\textit{The probability distributions of the first hitting times
of Bessel processes,}
Trans. Amer. Math. Soc.
\textbf{365} (2013) 5237--5257.

\bibitem{10} Y. Hamana and H. Matsumoto,
\textit{Hitting times of Bessel processes, volume of Wiener sausages
and zeros of Macdonald functions,}
J. Math. Soc. Japan (toappear).

\bibitem{11} Y. Hamana and H. Matsumoto,
\textit{Hitting times to spheres of Brownian motions with and without drifts,}
preprint, avalable at arXiv:1504.03043 [math.PR]

\bibitem{12} M. Kac and J. M. Luttinger,
\textit{Bose-Einstein condensation in the presence of impurities II,}
J. Math. Phys.
\textbf{15} (1974) 183--186.

\bibitem{13} K. It{\^o} and H. P. McKean Jr.,
\textit{Diffusion Processes and Their Sample Paths,}
Springer, 1974.

\bibitem{14} N. N. Lebedev,
\textit{Special Functions and Their Applications,}
Dover, 1972.

\bibitem{15} J. -F. Le Gall,
\textit{Sur le temps local d'intersection du mouvement brownien
plan et la m{\'e}thode de renormalisation de Varadhan,}
S{\'e}minaire de Probabiliti{\'e}s X\!I\!X,
Lecture Notes in Mathematics, Vol. 1123,
Springer, 1985, 314--331.

\bibitem{16} J. -F. Le Gall,
\textit{Fluctuation results for the Wiener sausage,}
Ann. Probab.
\textbf{16} (1988) 991--1018.

\bibitem{17} J. -F. Le Gall,
\textit{Sur une conjecture de M.~Kac,}
Probab. Th. Rel. Fields
\textbf{78} (1988) 389--402.

\bibitem{18} J. -F. Le~Gall,
\textit{Wiener sausage and self-intersection local times,}
J. Funct. Anal.
\textbf{88} (1990) 299--341.

\bibitem{19} W. Magnus, F. Oberhettinger and R. P. Soni,
\textit{Formulas and Theorems for the Special Functions of Mathematical Physics, 3rd. ed.,}
Springer, 1966

\bibitem{20} S. C. Port,
\textit{Asymptotic expansions for the expected volume of a stable sausage,}
Ann. Probab.
\textbf{18} (1990) 492--523.

\bibitem{21} F. Spitzer,
\textit{Electrostatic capacity, heat flow and Brownian motion,}
Z. Wahr. Verw. Gebiete
\textbf{3} (1964) 110--121.

\bibitem{22} F. Spitzer,
\textit{Discussion of \lq\lq Subadditive ergodic theory\rq\rq \, by J. F. C. Kingman,}
Ann. Probab.
\textbf{1} (1973) 904--905

\bibitem{23} G. N. Watson,
\textit{A Treatise on the Theory of Bessel Functions, Reprinted of 2nd ed.,}
Cambridge University Press, 1995.


\end{thebibliography}
\end{document}